\documentclass[11pt,reqno]{amsart}

\usepackage[T1]{fontenc}
\usepackage[utf8]{inputenc}
\usepackage{lmodern}
\usepackage{microtype}
\usepackage{amsmath,amssymb,amsthm,mathtools,mathrsfs}
\usepackage{booktabs,array}
\usepackage[hidelinks]{hyperref}
\hypersetup{
  pdftitle={Integral magneticity of the level-two K3 packet},
  pdfauthor={Alex Shvets},
  pdfsubject={Integral primitives of three level-two magnetic modular forms},
  pdfkeywords={magnetic modular form, CM theta lift, Weil representation, isogeny trace, hypergeometric function, K3 surface}
}

\allowdisplaybreaks
\numberwithin{equation}{section}
\newtheorem{theorem}{Theorem}[section]
\newtheorem{proposition}[theorem]{Proposition}
\newtheorem{lemma}[theorem]{Lemma}
\newtheorem{corollary}[theorem]{Corollary}
\theoremstyle{remark}

\newcommand{\Z}{\mathbf Z}

\newcommand{\C}{\mathbf C}

\newcommand{\D}{D}
\newcommand{\dd}{\,\mathrm d}
\newcommand{\ii}{\mathrm i}
\newcommand{\ord}{\operatorname{ord}}

\newcommand{\e}{\mathrm e}
\newcommand{\Tcal}{\mathcal T}
\newcommand{\Pcal}{\mathcal P}
\newcommand{\Fcal}{\mathcal F}
\newcommand{\Qcal}{\mathcal Q}

\title[Integral magneticity of the level-two K3 packet]
{Integral magneticity of the level-two K3 packet:\\
CM theta lifts and a $2$-isogeny trace contraction}

\author{Alex Shvets}
\address{Haifa, Israel}
\email{alex@shvets.io}
\urladdr{https://shvets.io}
\thanks{ORCID: 0009-0005-9802-379X}
\date{Version 1, July 20, 2026}

\subjclass[2020]{Primary 11F33, 11F37; Secondary 11F27, 11B65, 14J28, 33C05}
\keywords{magnetic modular form, CM theta lift, Weil representation, isogeny trace, hypergeometric function, K3 surface}

\begin{document}

\begin{abstract}
B\"onisch, Duhr, and Maggio introduced three meromorphic modular forms
$C_4,C_{6a},C_{6b}$ on $\Gamma_0(2)$, arising from a hypergeometric K3
family, and conjectured that they are magnetic of depths $1,2,2$.
Writing
\[
 C_4=\sum_{n\ge1}c_4(n)q^n,\qquad
 C_{6a}=\sum_{n\ge1}c_{6a}(n)q^n,\qquad
 C_{6b}=\sum_{n\ge1}c_{6b}(n)q^n,
\]
we prove the stronger denominator-one statements
\[
 \frac{c_4(n)}n,\qquad
 \frac{c_{6a}(n)}{n^2},\qquad
 \frac{c_{6b}(n)}{n^2}\in\Z
 \qquad(n\ge1).
\]
The weight-four case is reduced to a termwise binomial divisibility by a
hypergeometric change of Hauptmodul.  For weight six we identify the two
forms with canonical level-two CM forms of discriminants $-8$ and $-4$:
\[
 f_{3,-8,0,1,1}=-64C_{6a},\qquad
 f_{3,-4,2,1,1}=32C_{6b}.
\]
An explicit pair of vector-valued weakly holomorphic forms of weight
$-3/2$ then gives the full odd-prime divisibility through the
higher-level theta-lift coefficient formula of
L\"obrich--Schwagenscheidt.  The prime $2$ is treated independently.  If
$t=(\eta(2\tau)/\eta(\tau))^{24}$,
$H=\eta(\tau)^4/\eta(2\tau)^2$,
$J=2E_2(2\tau)-E_2(\tau)$, $u=64t$, and $\Tcal=H^4J$, we prove the
infinite-family contraction
\[
 U_2\bigl(\Tcal t\,\Z_2[[u]]\bigr)
 \subseteq 2^5\Tcal t\,\Z_2[[u]].
\]
Consequently $v_2(c_{6\bullet}(2^rm))\ge5r$, which is stronger than the
slope $2r$ required for double magneticity.  Thus the complete
level-two K3 packet is magnetic with global denominator one.
\end{abstract}

\maketitle
\tableofcontents

\section{Introduction and main theorem}

A meromorphic modular form
\[
 f(\tau)=\sum_{n\ne0}a(n)q^{n/N},
 \qquad q=\e^{2\pi\ii\tau},
\]
is called \emph{magnetic of depth $d$} when the algebraic numbers
$a(n)/n^d$ have globally bounded denominators.  The terminology was
introduced by Pa\c{s}ol and Zudilin \cite{PZ}, following the magnetic
example of Broadhurst--Zudilin and its proof by Li--Neururer
\cite{BZ,LN}.  The geometric framework of B\"onisch--Duhr--Maggio
\cite{BDM} produced a collection of new candidates associated with
families from mathematical physics.

The level-two K3 packet of \cite[Appendix~B.1]{BDM} is defined as follows.
Put
\begin{equation}\label{eq:def-t-J}
 t(\tau)=\left(\frac{\eta(2\tau)}{\eta(\tau)}\right)^{24},
 \qquad
 J(\tau)=2E_2(2\tau)-E_2(\tau).
\end{equation}
Then
\begin{align}
 C_4&=\frac{t}{3(1+64t)}\bigl(4J^2-E_4\bigr),
 \label{eq:def-C4}\\
 C_{6a}&=\frac{t(5+1408t+20480t^2)}{9(1-64t)^3}
          \bigl(8J^3+E_6\bigr),
 \label{eq:def-C6a}\\
 C_{6b}&=\frac{t(1-64t)}{9(1+64t)^2}
          \bigl(8J^3+E_6\bigr).
 \label{eq:def-C6b}
\end{align}
B\"onisch--Duhr--Maggio checked the required divisibilities for at least
the first $500$ coefficients but did not have a proof.  Allen--Long--Saad
later obtained Atkin--Swinnerton-Dyer congruences for meromorphic modular
forms and partial magneticity results, including control away from
specified small primes in relevant examples \cite{ALS}.

Our main result settles all three forms, with no residual denominator.

\begin{theorem}[Integral magneticity of the packet]\label{thm:main}
Let
\[
 C_4=\sum_{n\ge1}c_4(n)q^n,\qquad
 C_{6a}=\sum_{n\ge1}c_{6a}(n)q^n,\qquad
 C_{6b}=\sum_{n\ge1}c_{6b}(n)q^n.
\]
Then, for every $n\ge1$,
\begin{equation}\label{eq:main}
 n\mid c_4(n),\qquad
 n^2\mid c_{6a}(n),\qquad
 n^2\mid c_{6b}(n).
\end{equation}
Equivalently,
\[
 \D^{-1}C_4,\qquad \D^{-2}C_{6a},\qquad \D^{-2}C_{6b}
 \in q\Z[[q]],
 \qquad \D=q\frac{\dd}{\dd q}.
\]
\end{theorem}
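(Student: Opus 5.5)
The plan follows the three-part structure announced in the abstract: one argument for the weight-four form and two independent arguments (odd primes and the prime $2$) for the two weight-six forms.

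\textbf{Weight four.} I would express $C_4$ in hypergeometric form. On $\Gamma_0(2)$ the Hauptmodul $t$ is related to the period
\[
 H^{2}\sim {}_2F_1\!\left(\tfrac14,\tfrac34;1;\,\cdot\,\right)
\]
(or a ${}_3F_2$ square) through a rational function of $u=64t$. After a suitable change of Hauptmodul, the quasimodular combination $4J^2-E_4$ should be expressible as $\theta_t$-derivatives of the period. Then $\D^{-1}C_4$ becomes a series in $t$ whose coefficients are explicit sums of binomial products of the shape $\binom{2k}{k}\binom{4k}{2k}$, divided by an index. Since $t\in q\Z[[q]]$ has leading coefficient $1$, integrality of the $t$-expansion is equivalent to integrality of the $q$-expansion. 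I would therefore reduce $n\mid c_4(n)$ to a termwise divisibility $k\mid(\text{binomial expression})$, and prove that by Kummer/Lucas-type $p$-adic valuation counting.

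\textbf{Weight six, odd primes.} First I would identify $-64C_{6a}$ and $32C_{6b}$ with the CM forms $f_{3,-8,0,1,1}$ and $f_{3,-4,2,1,1}$. The poles lie at the CM points $t=1/64$ (discriminant $-8$) and $t=-1/64$ (discriminant $-4$). I would match principal parts there and compare finitely many coefficients, using a Sturm-type bound for the difference after clearing the denominators $(1\mp64t)^k$. Next I would exhibit vector-valued weakly holomorphic forms of weight $-3/2$ for the appropriate Weil representation whose theta lifts give $\D^{-2}f$ (Bruinier--Ono / L\"obrich--Schwagenscheidt style). Their coefficient formula writes $c(n)/n^2$ as a finite sum of the rational coefficients of the input forms over divisors. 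I would check that those input coefficients are integral by exhibiting the inputs as eta-quotient/theta combinations with integral expansions. This yields $p$-integrality of $c_{6\bullet}(n)/n^2$ for all odd $p$; only $p=2$ might escape, because of the level and the normalising constants $64,32$.

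\textbf{Weight six, the prime $2$.} This is the step I expect to be the main obstacle. Using $\mathcal T=H^4J$, I would write $C_{6\bullet}=\mathcal T\,t\,R(u)$ with $R\in\Z_2[[u]]$. The key input is the contraction
\[
 U_2(\mathcal T t\,\Z_2[[u]])\subseteq 2^5\mathcal T t\,\Z_2[[u]].
\]
To prove it, I would use the modular equation of level $2$: the roots $\tau/2,(\tau+1)/2$ give the two $2$-isogenous values of $t$, so $U_2$ acts via the trace from $\Gamma_0(4)$ to $\Gamma_0(2)$. I would compute $U_2(\mathcal T t u^k)$ through Newton-type recurrences in the elementary symmetric functions of the conjugates, and show by induction on $k$ that each image lies in $2^5\mathcal T t\Z_2[[u]]$, with $2$-adic gains tracked from the factor $64$ in $u$.

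Iterating the contraction gives $v_2(c(2^rm))\ge 5r$. After subtracting $2v_2(n)=2r$, the $2$-part of the denominator is nonnegative provided the base case $r=0$ is integral, i.e.\ provided $\mathcal T t R(u)$ is $2$-integral. The difficulty here is that the prefactors $1/9$ and the denominators $(1\mp64t)$ are harmless $2$-adically, but the explicit $U_2$ trace requires a careful choice of basis so that the induction closes.
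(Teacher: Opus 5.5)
Your proposal follows the paper's proof in all three parts: a hypergeometric primitive with termwise Legendre-type binomial divisibility for $C_4$; the CM identifications $F_{-8,0}=-64C_{6a}$, $F_{-4,2}=32C_{6b}$ combined with the L\"obrich--Schwagenscheidt coefficient formula and explicit weight $-3/2$ inputs (which the paper shows are $\Z[1/2]$-integral, all the odd-prime step needs) for odd primes; and the $U_2$ contraction from the degree-two modular equation for $p=2$. The differences are that you would fix the CM identifications by pole-order bounds plus a Sturm-type comparison of finitely many coefficients, whereas the paper writes $F/\Tcal$ as a Fricke-symmetric rational function of $u$ determined by one Fourier coefficient and the leading pole coefficient at $\ii/\sqrt2$; and the dyadic step you flag as the obstacle is closed in the paper by the branch identity $\Tcal((\tau+i)/2)/\Tcal(\tau)=(1-y_{1-i})/(1+u)$, $y_i=8t((\tau+i)/2)$, which gives $U_2\Fcal_f/\Tcal=\frac{1}{16(1+u)}\sum_{i=0}^{1}(y_i+u)f(8y_i)$ so that the power-sum recurrence closes with exactly the factor $2^5$.
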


The proof deliberately separates the odd and dyadic mechanisms in
weight six.  The odd-prime part comes from a CM theta lift and an explicit
negative-weight vector-valued form.  The dyadic part comes from a degree-two
modular equation and is stronger than required.  This separation also
makes the denominator-one conclusion robust: the intersection
$\Z[1/2]\cap\Z_2$ is $\Z$.

\section{The level-two modular dictionary and the three collapses}

Put
\begin{equation}\label{eq:tHJ}
 H=\frac{\eta(\tau)^4}{\eta(2\tau)^2},
 \qquad
 u=64t,
 \qquad
 \Tcal=H^4J.
\end{equation}
The product expansions begin
\begin{align*}
 t&=q+24q^2+300q^3+2624q^4+O(q^5),\\
 H&=1-4q+4q^2+O(q^3),\\
 J&=1+24q+24q^2+96q^3+O(q^4).
\end{align*}

\begin{proposition}[Level-two identities]\label{prop:dictionary}
One has
\begin{align}
 \D t&=tJ,\label{eq:Dt}\\
 J^2&=H^4(1+64t),\label{eq:J2}\\
 E_4&=H^4(1+256t),\label{eq:E4}\\
 E_6&=H^4J(1-512t),\label{eq:E6}\\
 H&={}_2F_1\!\left(\frac14,\frac14;1;-64t\right).
 \label{eq:Hhyper}
\end{align}
All branches in \eqref{eq:Hhyper} have constant term one at $q=0$.
\end{proposition}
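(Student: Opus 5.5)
The plan is to prove the five identities of Proposition~\ref{prop:dictionary} as identities between holomorphic modular forms on $\Gamma_0(2)$ (weights $0$, $2$, $4$, $6$). The ring $M_*(\Gamma_0(2))$ is generated by $J$ (weight $2$) and $E_4$ (or $H^4$), and $\dim M_k(\Gamma_0(2))$ is small. So each identity reduces, via the valence formula, to checking finitely many $q$-coefficients. The hypergeometric identity \eqref{eq:Hhyper} will instead follow from a differential equation.

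First, \eqref{eq:Dt}. Since $\D\log\eta(\tau)=E_2(\tau)/24$ and $\D\log\eta(2\tau)=2E_2(2\tau)/24$, we get
\[
 \D\log t=24\Bigl(\tfrac{2E_2(2\tau)}{24}-\tfrac{E_2(\tau)}{24}\Bigr)=2E_2(2\tau)-E_2(\tau)=J .
\]
This is exact and needs no coefficient check.

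Next, the weight-four and weight-six identities. Note that $J\in M_2(\Gamma_0(2))$, since the quasimodular $E_2$ parts cancel. Also $H^4=\eta(\tau)^{16}/\eta(2\tau)^8$ is a holomorphic eta quotient in $M_4(\Gamma_0(2))$: by Ligozat's criterion it has order $1$ at the cusp $0$ and order $0$ at $\infty$. Hence $H^4t=\eta(\tau)^{-8}\eta(2\tau)^{16}\in M_4(\Gamma_0(2))$, vanishing at $\infty$. Since $\dim M_4(\Gamma_0(2))=2$, the identities \eqref{eq:J2} and \eqref{eq:E4} each assert an equality in a $2$-dimensional space, so matching the constant term and the $q^1$ coefficient suffices. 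For \eqref{eq:J2}: $J^2=1+48q+\dots$ and $H^4(1+64t)=(1-16q+\dots)(1+64q+\dots)=1+48q+\dots$. For \eqref{eq:E4}: $1+240q$ on both sides. Identity \eqref{eq:E6} lives in $M_6(\Gamma_0(2))$, of dimension $2$, and both sides are holomorphic there. Checking $1-504q$ against $(1-16q)(1+24q)(1-512q)=1-504q+\dots$ settles it. I would double-check these dimensions with the valence formula (index $3$, so a weight-$k$ form has $k/4$ zeros), which confirms that two coefficients suffice.

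Finally, \eqref{eq:Hhyper}, which I expect to be the only real obstacle. Set $x=-64t$ and $F(x)={}_2F_1(\tfrac14,\tfrac14;1;x)$. The aim is to show that $H$, viewed as a function of $t$ near $t=0$, satisfies the hypergeometric equation
\[
 \theta_x^2F-x(\theta_x+\tfrac14)^2F=0,\qquad \theta_x=x\tfrac{d}{dx}=\tfrac{1}{J}\D ,
\]
where the last equality uses \eqref{eq:Dt}. The standard route is that a weight-one form (here $H$) satisfies a second-order linear ODE in a Hauptmodul. Its coefficients can be computed from $\D H$ and $\D^2H$, using that $\D H/H$ is expressible through $E_2$-type quasimodular forms: $\D\log H=\tfrac{4E_2(\tau)-2\cdot2E_2(2\tau)}{24}$. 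Concretely, I would write $\D\log H=-\tfrac16 J+\tfrac{1}{6}(E_2(\tau)-E_2(2\tau))\cdot(\dots)$, or more cleanly express everything through Ramanujan's equations for $E_2,E_4,E_6$ at $\tau$ and $2\tau$. Substituting then shows that $J^{-2}\D^2H-\dots$ is a weight-$5$ quasimodular combination that vanishes by \eqref{eq:J2}--\eqref{eq:E6}. The fallback is to verify the ODE as an identity of modular forms of a fixed weight on $\Gamma_0(2)$ by a Sturm-bound coefficient check. The key point there is to show that the defect $\theta_x^2H-x(\theta_x+\frac14)^2H$, multiplied by a suitable power of $J$, is a holomorphic (quasi)modular form of bounded weight. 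Once the ODE holds, $H$ and $F(-64t)$ are both solutions that are holomorphic at $t=0$ with value $1$. The local exponents at $x=0$ are $0,0$, so the holomorphic solution is unique up to scaling, giving $H=F(-64t)$ as power series in $t$. Since $t=q+O(q^2)$, this is also an identity in $q$, with the branch fixed by constant term one.
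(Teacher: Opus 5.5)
Your proposal is correct. For \eqref{eq:Dt}--\eqref{eq:E6} it is the paper's argument: eta logarithmic derivatives, then a comparison of the constant and $q^1$ coefficients in the two-dimensional spaces $M_4(\Gamma_0(2))$ and $M_6(\Gamma_0(2))$, which is exactly the paper's ``Sturm bound $1$''.

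For \eqref{eq:Hhyper} you take a genuinely different route. The paper goes through the Legendre modulus $\lambda=\vartheta_2^4/\vartheta_3^4$. It uses $t=\lambda^2/(256(1-\lambda))$, Jacobi's ${}_2F_1(\frac12,\frac12;1;\lambda)=\vartheta_3^2$, Kummer's quadratic transformation and eta--theta identities. This is short but imports several classical facts. You instead verify the hypergeometric equation inside the level-two dictionary. You then conclude by the formal recursion $n^2a_n=(n-\frac34)^2a_{n-1}$, which pins down the power-series solution with constant term one.

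That computation does close, and more cleanly than your sketch indicates. One has $\D\log H=(E_2-J)/12$ exactly. Differentiating \eqref{eq:J2} gives $\D J=J(E_2-J)/6+uH^4/2$, so you never need $E_4(2\tau)$ or the quasimodular Sturm-bound fallback. Write $\theta=J^{-1}\D$ and $y=\theta H/H$; the equation to prove is then $(1+u)(\theta y+y^2)+\frac u2y+\frac u{16}=0$. Ramanujan's $\D E_2=(E_2^2-E_4)/12$, together with $J^2-E_4=-3uH^4$ and $H^4(1+u)=J^2$, gives $(1+u)(\theta y+y^2)=-\frac u{48}-\frac{uE_2}{24J}$. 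The $E_2$-terms then cancel against $\frac u2 y$, and $-\frac u{48}-\frac u{24}+\frac u{16}=0$.

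So your route is self-contained and uses only identities already in the proposition together with Ramanujan's equation. It is also the general mechanism: a weight-one form satisfies a second-order equation in a Hauptmodul. The paper's route trades this explicit computation for external classical identities. A minor point: your intermediate expression for $\D\log H$ and the ``weight-$5$'' bookkeeping are garbled, but nothing in the argument depends on them.
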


\begin{proof}
The logarithmic derivative
$\D\log\eta(\delta\tau)=\delta E_2(\delta\tau)/24$ gives
\eqref{eq:Dt}.  The eta-quotient criterion shows that all expressions in
\eqref{eq:J2}--\eqref{eq:E6} are holomorphic modular forms on
$\Gamma_0(2)$ of the indicated weights.  Their first terms are
\begin{align*}
 J^2&=1+48q+O(q^2)=H^4(1+64t),\\
 E_4&=1+240q+O(q^2)=H^4(1+256t),\\
 E_6&=1-504q+O(q^2)=H^4J(1-512t).
\end{align*}
The Sturm bounds \cite{Sturm} at level $2$ in weights $4$ and $6$ are both $1$, so
these comparisons prove the three identities.

For \eqref{eq:Hhyper}, let
$\lambda=\vartheta_2^4/\vartheta_3^4$.  Standard theta products \cite{BB} give
\[
 t=\frac{\lambda^2}{256(1-\lambda)}.
\]
Kummer's quadratic transformation
\[
 {}_2F_1\!\left(\frac14,\frac14;1;
 -\frac{x^2}{4(1-x)}\right)
 =(1-x)^{1/4}{}_2F_1\!\left(\frac12,\frac12;1;x\right)
\]
together with
${}_2F_1(\frac12,\frac12;1;\lambda)=\vartheta_3^2$ and the standard
eta--theta identities gives \eqref{eq:Hhyper}.
\end{proof}

\begin{corollary}[Simultaneous collapse]\label{cor:collapse}
The three forms are
\begin{align}
 C_4&=\frac{tH^4}{1+u},\label{eq:C4collapse}\\
 C_{6a}&=\Tcal t\,\frac{5+22u+5u^2}{(1-u)^3},
 \label{eq:C6acollapse}\\
 C_{6b}&=\Tcal t\,\frac{1-u}{(1+u)^2}.
 \label{eq:C6bcollapse}
\end{align}
In particular, all three have integral Fourier coefficients.
\end{corollary}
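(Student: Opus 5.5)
The corollary should follow by substituting the identities of Proposition~\ref{prop:dictionary} into the defining formulas \eqref{eq:def-C4}--\eqref{eq:def-C6b} and simplifying. The only inputs are \eqref{eq:J2}, \eqref{eq:E4} and \eqref{eq:E6}, together with the substitution $u=64t$. Integrality should then follow from integrality of the building blocks $t,H,J$.

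\emph{Weight four.} Using \eqref{eq:J2} and \eqref{eq:E4},
\[
 4J^2-E_4=H^4\bigl(4(1+64t)-(1+256t)\bigr)=3H^4.
\]
Inserting this into \eqref{eq:def-C4} cancels the factor $3$ and gives $C_4=tH^4/(1+u)$, which is \eqref{eq:C4collapse}.

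\emph{Weight six.} Write $E_6=H^4J(1-512t)=H^4J(1-8u)$ by \eqref{eq:E6}, and $J^3=J\cdot J^2=H^4J(1+u)$ by \eqref{eq:J2}. Then
\[
 8J^3+E_6=H^4J\bigl(8+8u+1-8u\bigr)=9\Tcal .
\]
The factor $9$ cancels the $9$ in the denominators of \eqref{eq:def-C6a} and \eqref{eq:def-C6b}. For $C_{6b}$ this immediately gives \eqref{eq:C6bcollapse}. For $C_{6a}$ we also check
\[
 5+1408t+20480t^2=5+22u+5u^2,
\]
using $22\cdot64=1408$ and $5\cdot 64^2=20480$, while $(1-64t)^3=(1-u)^3$. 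This gives \eqref{eq:C6acollapse}.

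\emph{Integrality.} The function $t=q\prod(1-q^n)^{-24}(1-q^{2n})^{24}$ lies in $q\Z[[q]]$, and $H=\prod(1-q^n)^4(1-q^{2n})^{-2}$ lies in $1+q\Z[[q]]$. For $J$, note that $E_2=1-24\sum\sigma(n)q^n$ has integer coefficients, so $J=2E_2(2\tau)-E_2(\tau)\in 1+q\Z[[q]]$. Since $u\in q\Z[[q]]$, the series $(1\pm u)^{-1}$ lie in $1+q\Z[[q]]$, and all right-hand sides are products of integral power series. Each has a factor $t$, so each lies in $q\Z[[q]]$.

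There is no real obstacle here. The only point needing care is the exact cancellation of the numerical factors $3$ and $9$, which is precisely what makes the coefficients integral rather than merely $3$- or $9$-integral.
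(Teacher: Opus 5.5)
Your proposal is correct and follows the paper's proof: substitute \eqref{eq:J2}, \eqref{eq:E4}, \eqref{eq:E6} to obtain $4J^2-E_4=3H^4$ and $8J^3+E_6=9H^4J=9\Tcal$, cancel the factors $3$ and $9$, rewrite the polynomials in $t$ in terms of $u=64t$, and deduce integrality from the integral expansions of $t,H,J$ together with the denominators having constant term one. Your explicit check that $J\in 1+q\Z[[q]]$ via the integrality of $E_2$ is a more careful version of the paper's remark about eta products, since $J$ itself is not an eta product.
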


\begin{proof}
Equations \eqref{eq:J2} and \eqref{eq:E4} give
$4J^2-E_4=3H^4$.  Moreover,
\[
 8J^3+E_6
 =8H^4J(1+64t)+H^4J(1-512t)=9H^4J.
\]
Substitution into \eqref{eq:def-C4}--\eqref{eq:def-C6b} gives the
formulas.  Their integrality follows from the eta products and the fact
that the three denominators have constant term one.
\end{proof}

We also record the Fricke data.  For
$W_2=\left(\begin{smallmatrix}0&-1\\2&0\end{smallmatrix}\right)$ use the
determinant-normalized slash action
\[
 (f|_kW_2)(\tau)=2^{k/2}(2\tau)^{-k}f\!\left(-\frac1{2\tau}\right).
\]
The eta transformation law gives
\begin{equation}\label{eq:fricke}
 t|_0W_2=2^{-12}t^{-1},\qquad
 H|_1W_2=-\ii2^{3/2}t^{1/4}H,\qquad
 J|_2W_2=-J.
\end{equation}
Consequently,
\begin{equation}\label{eq:uTfricke}
 u\!\left(-\frac1{2\tau}\right)=\frac1u,
 \qquad
 \Tcal|_6W_2=-u\Tcal.
\end{equation}

\section{The integral primitive of $C_4$}

This section gives a self-contained proof of the first assertion of
Theorem~\ref{thm:main}.  Define
\begin{equation}\label{eq:x}
 x=\frac{t}{1+64t}\in q\Z[[q]]
\end{equation}
and let $\Pcal$ be the unique series with $\Pcal(0)=0$ and
$\D\Pcal=C_4$.

\begin{proposition}[Hypergeometric primitive]\label{prop:C4primitive}
One has
\begin{equation}\label{eq:dPdx}
 \frac{\dd\Pcal}{\dd x}
 ={}_2F_1\!\left(\frac14,\frac34;1;64x\right)^2.
\end{equation}
If
\begin{equation}\label{eq:um}
 u_m=64^m\frac{(\frac14)_m(\frac34)_m}{(m!)^2}
 =\binom{4m}{2m}\binom{2m}{m},
\end{equation}
then
\begin{equation}\label{eq:Pexplicit}
 \Pcal=\sum_{N\ge1}\frac{x^N}{N}
 \sum_{r=0}^{N-1}u_ru_{N-1-r}.
\end{equation}
\end{proposition}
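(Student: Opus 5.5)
The plan is to compute $\dd\Pcal/\dd x$ by the chain rule in the $q$-expansion ring, use Proposition~\ref{prop:dictionary} to turn the result into an expression in $H$ and $u$, and then recognise that expression as a hypergeometric square via Pfaff's transformation applied to \eqref{eq:Hhyper}. Since $x=q+O(q^2)$, the series $x$ is a valid formal coordinate. Hence every series in $q$ can be re-expanded in $x$, and $\dd/\dd x=(\D x)^{-1}\D$ makes sense in $\C((q))$.

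First, by \eqref{eq:C4collapse} we have $\D\Pcal=C_4=xH^4$. Using \eqref{eq:Dt} and $u=64t$, we get $\D x=\D t/(1+u)^2=tJ/(1+u)^2$, so $x/\D x=(1+u)/J$. By \eqref{eq:J2}, $J=H^2(1+u)^{1/2}$, where the square root is the branch with constant term one; this is forced because $J$ and $H$ both start with $1$. Therefore
\[
 \frac{\dd\Pcal}{\dd x}=\frac{xH^4}{\D x}=H^2(1+u)^{1/2}=\bigl(H(1+u)^{1/4}\bigr)^2 .
\]
Next, Pfaff's transformation ${}_2F_1(a,b;c;z)=(1-z)^{-a}{}_2F_1(a,c-b;c;z/(z-1))$ with $a=b=\tfrac14$, $c=1$, $z=-u$ gives
\[
 {}_2F_1\!\left(\tfrac14,\tfrac14;1;-u\right)=(1+u)^{-1/4}\,{}_2F_1\!\left(\tfrac14,\tfrac34;1;\tfrac{u}{1+u}\right).
\]
Since $u/(1+u)=64x$, combining this with \eqref{eq:Hhyper} gives $H(1+u)^{1/4}={}_2F_1(\tfrac14,\tfrac34;1;64x)$, which proves \eqref{eq:dPdx}. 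All the identities involved hold as identities of formal power series with constant term one, so no analytic branch issues arise.

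For \eqref{eq:um}, I will use the duplication formula $(a)_m(a+\tfrac12)_m=(2a)_{2m}/4^m$ with $a=\tfrac14$. This gives $(\tfrac14)_m(\tfrac34)_m=(\tfrac12)_{2m}/4^m=(4m)!/(4^{3m}(2m)!)$. Multiplying by $64^m/(m!)^2$ yields $(4m)!/((2m)!\,m!^2)=\binom{4m}{2m}\binom{2m}{m}$. Hence ${}_2F_1(\tfrac14,\tfrac34;1;64x)=\sum_m u_mx^m$. Squaring, the coefficient of $x^{N-1}$ is $\sum_{r=0}^{N-1}u_ru_{N-1-r}$, and integrating termwise in $x$ with $\Pcal(0)=0$ gives \eqref{eq:Pexplicit}.

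The only delicate point is the justification of the formal manipulations: the choice of square root in \eqref{eq:J2} and the legitimacy of changing variable from $q$ to $x$. Both are handled by normalising constant terms and by the invertibility of $x=q+O(q^2)$. I do not expect a real obstacle beyond bookkeeping.
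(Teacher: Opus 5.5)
Your proof is correct and follows essentially the same route as the paper: the chain rule together with \eqref{eq:J2} reduces $\dd\Pcal/\dd x$ to $H^2(1+u)^{1/2}$, and Pfaff's transformation of \eqref{eq:Hhyper} with $u/(1+u)=64x$ identifies this with the hypergeometric square, followed by termwise expansion and integration. Your duplication-formula computation of $u_m=\binom{4m}{2m}\binom{2m}{m}$ spells out a step the paper leaves under ``expansion,'' and the remaining differences are only bookkeeping: you compute $\D x$ directly, while the paper passes through $\dd\Pcal/\dd t$.
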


\begin{proof}
By \eqref{eq:C4collapse} and \eqref{eq:Dt},
\[
 \frac{\dd\Pcal}{\dd t}
 =\frac{H^4}{J(1+64t)}
 =\frac{H^2}{(1+64t)^{3/2}},
\]
where the branch is fixed by the constant term one.  Since
$\dd x/\dd t=(1+64t)^{-2}$,
\[
 \frac{\dd\Pcal}{\dd x}=H^2(1+64t)^{1/2}.
\]
Pfaff's transformation applied to \eqref{eq:Hhyper} gives
\[
 H=(1+64t)^{-1/4}
 {}_2F_1\!\left(\frac14,\frac34;1;
 \frac{64t}{1+64t}\right),
\]
which proves \eqref{eq:dPdx}.  Expansion and termwise integration give
\eqref{eq:um}--\eqref{eq:Pexplicit}.
\end{proof}

\begin{lemma}[Pairwise binomial divisibility]\label{lem:pairwise}
For all $r,s\ge0$,
\begin{equation}\label{eq:pairwise}
 r+s+1\mid u_ru_s.
\end{equation}
\end{lemma}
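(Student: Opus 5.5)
The plan is to argue prime by prime with Legendre's formula, reducing the divisibility to an elementary statement about fractional parts. First rewrite
\[
 u_m=\binom{4m}{2m}\binom{2m}{m}=\frac{(4m)!}{(2m)!\,(m!)^2},
\]
so that for every prime $p$
\[
 v_p(u_m)=\sum_{k\ge1}\Bigl(\Bigl\lfloor\frac{4m}{p^k}\Bigr\rfloor-\Bigl\lfloor\frac{2m}{p^k}\Bigr\rfloor-2\Bigl\lfloor\frac{m}{p^k}\Bigr\rfloor\Bigr)
 =\sum_{k\ge1}g\Bigl(\Bigl\{\frac{m}{p^k}\Bigr\}\Bigr),
\]
where $g(x)=\lfloor 4x\rfloor-\lfloor 2x\rfloor$ for $x\in[0,1)$. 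The second equality uses $\lfloor cm/p^k\rfloor=c\lfloor m/p^k\rfloor+\lfloor c\{m/p^k\}\rfloor$ for $c=4,2,1$; the integer parts cancel since $4-2-2=0$. A direct check on the four quarter-intervals gives
\[
 g=0 \text{ on } [0,\tfrac14),\qquad g=1 \text{ on } [\tfrac14,\tfrac34),\qquad g=2 \text{ on } [\tfrac34,1).
\]
So every summand is nonnegative, and a summand is positive exactly when the fractional part is at least $\tfrac14$.

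Now fix $r,s\ge0$, put $n=r+s+1$, and fix a prime $p$. It suffices to show that for each $k$ with $1\le k\le v_p(n)$, at least one of the $k$-th summands for $r$ and for $s$ is $\ge1$. Summing over $k$ then gives
\[
 v_p(u_r)+v_p(u_s)\ge v_p(n),
\]
which proves \eqref{eq:pairwise}.

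For such a $k$ write $\varepsilon=p^{-k}$, $x=\{r/p^k\}$ and $y=\{s/p^k\}$. Both $x$ and $y$ lie in $\{0,\varepsilon,\dots,1-\varepsilon\}$. Since $p^k\mid r+s+1$, we have $x+y\equiv-\varepsilon\pmod 1$. Moreover $0\le x+y\le 2-2\varepsilon$, so this forces $x+y=1-\varepsilon$. If both $x,y<\tfrac14$, then $x+y<\tfrac12\le 1-\varepsilon$ (using $p^k\ge2$), a contradiction. Hence $\max(x,y)\ge\tfrac14$, so $g(x)+g(y)\ge1$, which completes the argument.

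The only step needing care is the interval description of $g$, in particular that $g$ does not vanish on $[\tfrac12,\tfrac34)$. Misreading it there would produce apparent counterexamples, such as $p=5$, $r=1$, $s=3$. Once the table is confirmed, the rest is the routine carry-free bookkeeping above; no case split on $p=2$ versus odd $p$ is needed.
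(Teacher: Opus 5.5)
Your proof is correct and follows the paper's route. Legendre's formula reduces each $p^k$-level contribution to $\lfloor 4\rho/p^k\rfloor-\lfloor 2\rho/p^k\rfloor\ge0$, which vanishes only when $4\rho<p^k$; for $k\le v_p(r+s+1)$ the residues of $r$ and $s$ modulo $p^k$ sum to $p^k-1$, so they cannot both be small, and your fractional-part table for $g$ just spells this out in more detail.
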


\begin{proof}
Fix a prime $p$ and put $\alpha=v_p(r+s+1)$.  For $q=p^\nu$,
Legendre's formula gives the $q$-level contribution
\[
 \delta_q(n)=
 \left\lfloor\frac{4n}{q}\right\rfloor
 -\left\lfloor\frac{2n}{q}\right\rfloor
 -2\left\lfloor\frac{n}{q}\right\rfloor
\]
to $v_p(u_n)$.  If $n=aq+\rho$, $0\le\rho<q$, then
\[
 \delta_q(n)=
 \left\lfloor\frac{4\rho}{q}\right\rfloor
 -\left\lfloor\frac{2\rho}{q}\right\rfloor.
\]
Thus $\delta_q(n)=0$ exactly when $4\rho<q$, and otherwise
$\delta_q(n)\ge1$.

For $1\le\nu\le\alpha$, let $\rho_r,\rho_s$ be the residues of $r,s$
modulo $q=p^\nu$.  Since $r+s\equiv-1\pmod q$,
$\rho_r+\rho_s=q-1$.  The inequalities $4\rho_r<q$ and
$4\rho_s<q$ cannot both hold.  Hence
$\delta_{p^\nu}(r)+\delta_{p^\nu}(s)\ge1$ for every
$1\le\nu\le\alpha$.  Summing the nonnegative Legendre contributions
proves
$v_p(u_ru_s)\ge\alpha=v_p(r+s+1)$.
\end{proof}

\begin{corollary}\label{cor:C4done}
For every $n\ge1$, $n\mid c_4(n)$.
\end{corollary}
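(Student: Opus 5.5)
The plan is to show that the primitive $\Pcal=\D^{-1}C_4$ of Proposition~\ref{prop:C4primitive} has integral $q$-expansion. Since $\D\Pcal=C_4$ and $\Pcal(0)=0$, we have $\Pcal=\sum_{n\ge1}\frac{c_4(n)}{n}q^n$. Hence $\Pcal\in q\Z[[q]]$ is exactly the statement $n\mid c_4(n)$ for all $n\ge1$.

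First, I will show that $\Pcal$ is a power series in $x$ with integer coefficients. By \eqref{eq:Pexplicit}, the coefficient of $x^N$ is
\[
 \frac1N\sum_{r=0}^{N-1}u_ru_{N-1-r}.
\]
Each summand has $r+s+1=N$ with $s=N-1-r$, so Lemma~\ref{lem:pairwise} gives $N\mid u_ru_{N-1-r}$ term by term. The $u_m=\binom{4m}{2m}\binom{2m}{m}$ are integers, so each coefficient is an integer and $\Pcal\in x\Z[[x]]$.

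Second, I will substitute $x=t/(1+64t)$ back into this series. Since $t=(\eta(2\tau)/\eta(\tau))^{24}=q\prod(1+q^n)^{24}\in q+q^2\Z[[q]]$ and $1+64t$ is a unit in $\Z[[q]]$, we get $x\in q\Z[[q]]$, as already noted in \eqref{eq:x}. Therefore $x^N\in q^N\Z[[q]]$, and the series $\sum_N a_Nx^N$ with $a_N\in\Z$ converges $q$-adically to an element of $q\Z[[q]]$.

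This element coincides with $\Pcal$ as a $q$-series. The reason is that \eqref{eq:dPdx} was derived as an identity of formal series, via the chain rule in $q$ with $\dd x/\dd q$ invertible at the leading order, and both sides vanish at $q=0$, so the constants of integration agree.

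I do not expect a genuine obstacle. The only point needing care is this formal justification of termwise integration and substitution: the identity $\dd\Pcal/\dd x=F(64x)^2$ together with $\Pcal(0)=0$ determines $\Pcal$ as a series in $x$ over $\mathbf{Q}$, and the series in $x$ and in $q$ agree because $x=q+O(q^2)$ is an invertible change of variable.
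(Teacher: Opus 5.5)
Your proposal is correct and follows the paper's proof essentially verbatim: apply Lemma~\ref{lem:pairwise} with $s=N-1-r$ to each summand of \eqref{eq:Pexplicit} to get $\Pcal\in x\Z[[x]]$, then use $x\in q\Z[[q]]$ and compare coefficients in $\D\Pcal=C_4$. Your extra remarks on the formal justification (that the substitution $x=q+O(q^2)$ is an invertible change of variable and that the constants of integration match) are sound, and they make explicit what the paper leaves implicit.
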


\begin{proof}
In \eqref{eq:Pexplicit}, take $s=N-1-r$ in
Lemma~\ref{lem:pairwise}.  Every summand is divisible by $N$, so
$\Pcal\in x\Z[[x]]$.  Since $x\in q\Z[[q]]$, one has
$\Pcal\in q\Z[[q]]$.  Comparing coefficients in
$\D\Pcal=C_4$ proves the claim.
\end{proof}

\section{The vector-valued negative-weight input}

We now construct the forms that control the odd-prime part of the
weight-six candidates.  Let
\[
 A=\Z/4\Z,\qquad Q(r)=\frac{r^2}{8}\pmod\Z,
\]
and let $\rho_2$ be the Weil representation of the finite quadratic
module $(A,Q)$.  Write $\{\mathfrak e_r:r\in A\}$ for the standard basis
of $\C[A]$.  The unary theta vector
\begin{equation}\label{eq:theta-vector}
 \boldsymbol\theta(\tau)
 =\sum_{r\bmod4}\theta_r(\tau)\mathfrak e_r,
 \qquad
 \theta_r(\tau)=\sum_{n\equiv r\ (4)}q^{n^2/8},
\end{equation}
transforms with weight $1/2$ for $\rho_2$; see, for example, the standard Weil-representation construction in \cite{Bruinier}.

For a modular form of weight $\kappa$, put
\begin{equation}\label{eq:serre}
 \mathscr D_\kappa=\D-\frac{\kappa}{12}E_2.
\end{equation}
Set
\begin{align}
 A_0&=E_{10}\boldsymbol\theta,\nonumber\\
 A_1&=E_8\mathscr D_{1/2}\boldsymbol\theta,\nonumber\\
 A_2&=E_6\mathscr D_{5/2}\mathscr D_{1/2}
      \boldsymbol\theta.
 \label{eq:A012}
\end{align}
All three have weight $21/2$ for $\rho_2$.

\begin{proposition}[Explicit Poincar\'e series]\label{prop:Pexplicit}
The vector-valued forms
\begin{align}
 \Pcal_0&=\Delta^{-1}
 \left(\frac{11}{9}A_0-12A_1+32A_2\right),
 \label{eq:P0}\\
 \Pcal_2&=\Delta^{-1}
 \left(-\frac1{54}A_0+\frac23A_1+\frac{16}{3}A_2\right)
 \label{eq:P2}
\end{align}
are weakly holomorphic modular forms of weight $-3/2$ for $\rho_2$.
They are the Maass--Poincar\'e series
\begin{equation}\label{eq:P-identification}
 \Pcal_0=P_{-3/2,8,0},\qquad
 \Pcal_2=P_{-3/2,4,2},
\end{equation}
normalized by the principal parts
\begin{equation}\label{eq:principal-parts}
 \Pcal_0=2q^{-1}\mathfrak e_0+O(1),\qquad
 \Pcal_2=2q^{-1/2}\mathfrak e_2+O(1).
\end{equation}
Moreover, every Fourier coefficient of $\Pcal_0$ and $\Pcal_2$ lies in
$\Z[1/2]$.
\end{proposition}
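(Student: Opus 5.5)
We prove the proposition in four steps: modularity, principal parts, identification with the Poincaré series, and coefficient integrality. \textbf{Modularity.} The theta vector $\boldsymbol\theta$ has weight $1/2$ for $\rho_2$. The Serre derivative $\mathscr D_\kappa$ sends weight-$\kappa$ forms for $\rho_2$ to weight-$(\kappa+2)$ forms, because the anomalous term in $E_2$ cancels the inhomogeneous term in the derivative. Hence $\mathscr D_{1/2}\boldsymbol\theta$ has weight $5/2$ and $\mathscr D_{5/2}\mathscr D_{1/2}\boldsymbol\theta$ has weight $9/2$. Multiplying by the level-one scalar forms $E_{10},E_8,E_6$, which are invariant under $\mathrm{SL}_2(\Z)$ and commute with $\rho_2$, gives three holomorphic forms $A_0,A_1,A_2$ of weight $21/2$. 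Dividing by $\Delta$, which has weight $12$ and no zeros on $\mathfrak H$, gives weakly holomorphic forms of weight $-3/2$. This settles the first assertion for any coefficients.

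\textbf{Principal parts.} The components $\theta_r$ have $q$-exponents in $r^2/8+\Z$: component $0$ starts at $q^0$, components $\pm1$ start at $q^{1/8}$, and component $2$ starts at $q^{1/2}$. After division by $\Delta=q-24q^2+\cdots$, the only possible negative powers are $q^{-1}$ and $q^0$ in component $0$, and $q^{-1/2}$ in component $2$. Components $\pm1$ have exponent $\ge 1/8-1<0$, so they also need checking, with leading exponent $-7/8$.

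I would write the first two terms of each $A_i$ explicitly, using $\mathscr D_{1/2}q^{a}=(a-\tfrac1{24})q^a+\cdots$. I would then check three things. First, the stated linear combinations kill the $q^{-7/8}$ terms in components $\pm1$. Second, they give coefficient $2$ at $q^{-1}\mathfrak e_0$ (respectively $0$ for $\Pcal_2$). Third, they give $0$ (respectively $2$) at $q^{-1/2}\mathfrak e_2$. This is three linear conditions on three unknowns, which is why the space is spanned by $A_0,A_1,A_2$, and solving them should reproduce the displayed rational coefficients.

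\textbf{Identification with Poincaré series.} Both $P_{-3/2,8,0}$ and $P_{-3/2,4,2}$ are weakly holomorphic in weight $-3/2<0$. Their principal parts are $\mathfrak e_\gamma q^{-m}+\mathfrak e_{-\gamma}q^{-m}$, which equals $2q^{-1}\mathfrak e_0$ for $\gamma=0$ and $2q^{-1/2}\mathfrak e_2$ for $\gamma=2=-2$. Subtracting the explicit form from the Poincaré series therefore leaves a holomorphic vector-valued form of negative weight, which vanishes. The one point requiring care is the constant term $q^0\mathfrak e_0$. A holomorphic form with nonzero constant term would still have to vanish in negative weight, so the principal parts are all that needs to match. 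I also need to confirm that the normalization of $P_{-3/2,m,\gamma}$ matches the one used by Löbrich--Schwagenscheidt.

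\textbf{Integrality.} This is the main obstacle. The forms $\boldsymbol\theta,E_{2k},\Delta^{-1}$ have integral $q$-expansions, and the Serre derivatives introduce denominators only from the exponents $n^2/8-\kappa/24$ and from $\kappa/12$. The exponent denominators are powers of $2$ and $3$, and the coefficients $11/9$, $1/54$, $2/3$ also introduce $3$'s. The plan is to rewrite each $\Pcal_i$ to exhibit $3$-integrality. Write $\mathscr D_{1/2}\boldsymbol\theta=\D\boldsymbol\theta-\tfrac1{24}E_2\boldsymbol\theta$, and use $E_{10}=E_4E_6$, $E_8=E_4^2$ together with the congruences $E_4\equiv E_6\equiv 1$ and $E_2\equiv1 \pmod{3}$ to show that the combination of $1/9$- and $1/54$-terms cancels modulo the appropriate powers of $3$. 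As a fallback, I would use a Sturm-type bound for $\Z[1/2]$-integrality. The $3$-adic valuation of all coefficients of a weight $21/2$ vector form is determined by finitely many coefficients, since $\Delta\Pcal_i$ is a holomorphic form whose scalar components are half-integral weight forms on $\Gamma(8)$. So it suffices to check that many coefficients of the explicit numerators lie in $\Z_{(3)}$; this finite check is the crux.
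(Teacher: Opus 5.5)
\textbf{Gap in the identification step.} You say that $P_{-3/2,8,0}$ and $P_{-3/2,4,2}$ are weakly holomorphic ``in weight $-3/2<0$'', but negative weight does not imply this. A Maass--Poincar\'e series of weight $k<0$ is in general only a harmonic Maass form. The operator $\xi_k$ sends it to a constant multiple of the cuspidal Poincar\'e series of weight $2-k$ for $\rho_2^*$, and it is weakly holomorphic exactly when that shadow vanishes. Without this, $P_{-3/2,m,\gamma}-\Pcal_j$ is only a harmonic form with trivial principal part, and your ``holomorphic form of negative weight vanishes'' step does not yet apply.

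The paper supplies the missing input by proving $S_{7/2}(\rho_2^*)=0$. Theta decomposition identifies this space with $J_{4,2}^{\mathrm{cusp}}$. The Eichler--Zagier spanning set $E_4\phi_{0,1}^2$, $E_6\phi_{-2,1}\phi_{0,1}$, $E_4^2\phi_{-2,1}^2$ for weak Jacobi forms of weight $4$ and index $2$ has linearly independent $q^0$ terms, so no nonzero combination is cuspidal. Alternatively, you can close the gap with your own explicit forms. Put $g=\xi_{-3/2}(P_{-3/2,m,\gamma}-\Pcal_j)\in S_{7/2}(\rho_2^*)$. The Bruinier--Funke pairing gives $(g,g)=\{g,P_{-3/2,m,\gamma}-\Pcal_j\}$, and this involves only the holomorphic-part coefficients of exponent $\le0$. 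Those vanish, except possibly the constant term, which pairs with the zero constant term of the cusp form $g$. Hence $g=0$, and only then is the difference holomorphic of negative weight.

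\textbf{Integrality.} Your plan follows the paper's route: rewrite the numerators with the second Serre-derivative formula and reduce modulo $3$. As described, however, it only finishes $\Pcal_0$. There the single $3$-adic denominator sits in $\frac16(8E_4E_6+3E_4^2E_2+E_6E_2^2)\boldsymbol\theta$, and it is removed by $E_2\equiv E_4\equiv E_6\equiv1\pmod3$. For $\Pcal_2$ the numerator is $\frac13\bigl(16E_6\D^2\boldsymbol\theta+(2E_4^2-4E_6E_2)\D\boldsymbol\theta\bigr)+\frac1{36}E_2(E_6E_2-E_4^2)\boldsymbol\theta$. The last term needs $E_6E_2-E_4^2=2\D E_6\equiv0\pmod 9$. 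The bracket in the first term is congruent modulo $3$ to $(\D^2+\D)\boldsymbol\theta$. Its vanishing modulo $3$ comes from the theta exponents, via $\frac{n^2}8\bigl(\frac{n^2}8+1\bigr)\equiv0\pmod3$, and not from Eisenstein congruences. Your Sturm-bound fallback could work in principle, since all coefficients lie a priori in $\Z[1/6]$, but it is an unexecuted finite computation that this observation makes unnecessary.
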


\begin{proof}
The Serre derivative preserves the representation, so
\eqref{eq:P0}--\eqref{eq:P2} have the asserted transformation law and
weight.  The leading coefficients of
$\boldsymbol\theta$, $\mathscr D_{1/2}\boldsymbol\theta$, and
$\mathscr D_{5/2}\mathscr D_{1/2}\boldsymbol\theta$ in the relevant
components are
\begin{center}
\begin{tabular}{c c c c}
\toprule
component & $\boldsymbol\theta$ &
$\mathscr D_{1/2}\boldsymbol\theta$ &
$\mathscr D_{5/2}\mathscr D_{1/2}\boldsymbol\theta$\\
\midrule
$r=0$, exponent $0$ & $1$ & $-1/24$ & $5/576$\\
$r=1,3$, exponent $1/8$ & $1$ & $1/12$ & $-1/144$\\
$r=2$, exponent $1/2$ & $2$ & $11/12$ & $77/288$\\
\bottomrule
\end{tabular}
\end{center}
Substitution shows that the numerator of \eqref{eq:P0} has constant
term $2\mathfrak e_0$ and no other term below $q$, while the numerator
of \eqref{eq:P2} has leading term $2q^{1/2}\mathfrak e_2$ and no other
term below $q$.  Since $\Delta=q+O(q^2)$, this proves
\eqref{eq:principal-parts}.

We next show that the relevant harmonic Maass--Poincar\'e series are
weakly holomorphic.  Their shadows lie in
$S_{7/2}(\rho_2^*)$.  Theta decomposition identifies this space with
the Jacobi cusp space $J_{4,2}^{\mathrm{cusp}}$.  By the
Eichler--Zagier structure theorem \cite{EZ}, every weak Jacobi form of weight $4$
and index $2$ is a linear combination of
\[
 E_4\phi_{0,1}^2,\qquad
 E_6\phi_{-2,1}\phi_{0,1},\qquad
 E_4^2\phi_{-2,1}^2.
\]
With $X=\zeta+\zeta^{-1}$, their $q^0$ terms are respectively
\[
 (X+10)^2,\qquad (X-2)(X+10),\qquad (X-2)^2.
\]
The determinant of their coefficient vectors in $1,X,X^2$ is $-1728$.
A Jacobi cusp form has zero $q^0$ term, so all three coefficients must
vanish.  Hence
\begin{equation}\label{eq:shadow-zero}
 S_{7/2}(\rho_2^*)=0.
\end{equation}
Thus the two harmonic Maass--Poincar\'e series in
\eqref{eq:P-identification} are weakly holomorphic.  Their differences
from \eqref{eq:P0} and \eqref{eq:P2} are holomorphic forms of negative
weight and therefore vanish.

It remains to control denominators.  Work componentwise in
\[
 R=\Z[1/2][[q^{1/8}]].
\]
Using Ramanujan's identity for $\D E_2$,
\begin{equation}\label{eq:second-serre}
 \mathscr D_{5/2}\mathscr D_{1/2}\boldsymbol\theta
 =\D^2\boldsymbol\theta-\frac14E_2\D\boldsymbol\theta
 +\frac{3E_2^2+2E_4}{576}\boldsymbol\theta.
\end{equation}
Let $N_0,N_2$ be the numerators in \eqref{eq:P0}, \eqref{eq:P2}.
A direct collection of terms gives
\begin{align}
 N_0={}&32E_6\D^2\boldsymbol\theta
 -(12E_4^2+8E_6E_2)\D\boldsymbol\theta\nonumber\\
 &+\left(\frac43E_4E_6+\frac12E_4^2E_2
 +\frac16E_6E_2^2\right)\boldsymbol\theta.
 \label{eq:N0}
\end{align}
The only possible odd denominator in $N_0$ occurs in the last bracket,
which equals
\[
 \frac16\bigl(8E_4E_6+3E_4^2E_2+E_6E_2^2\bigr).
\]
Its numerator is divisible by $3$ in $R$, since
\[
 8E_4E_6+3E_4^2E_2+E_6E_2^2\equiv8+3+1\equiv0\pmod3
\]
and $E_2\equiv E_4\equiv E_6\equiv1\pmod3$.  Hence $N_0\in R$.
Similarly,
\begin{align}
 N_2=\frac13\biggl(&16E_6\D^2\boldsymbol\theta
 +(2E_4^2-4E_6E_2)\D\boldsymbol\theta\nonumber\\
 &+\frac{E_2(E_6E_2-E_4^2)}{12}\boldsymbol\theta\biggr).
 \label{eq:N2}
\end{align}
Since $E_6E_2-E_4^2=2\D E_6$, the last term inside the parentheses is
divisible by $3$ in $R$.  Modulo $3$, the first two terms reduce to
$(\D^2+\D)\boldsymbol\theta$.  On a theta monomial $q^{n^2/8}$ its
coefficient is
\[
 \frac{n^2}{8}\left(\frac{n^2}{8}+1\right)\equiv0\pmod3:
\]
if $3\mid n$ the first factor is $0$, while if $3\nmid n$ then $n^2/8\equiv-1\pmod3$ and the second factor is $0$.  Thus $N_2\in R$.
Finally, $\Delta^{-1}\in q^{-1}\Z[[q]]$, proving the asserted
$2$-integrality.
\end{proof}

We will need one exact positive coefficient from each form.

\begin{lemma}\label{lem:first-P-coeff}
Writing
\[
 \Pcal_j=\sum_{r\bmod4}\sum_{n\gg-\infty}
 a_j(n,r)q^{n/8}\mathfrak e_r,
 \qquad j\in\{0,2\},
\]
one has
\begin{equation}\label{eq:first-P-coeff}
 a_0(1,1)=-640,
 \qquad
 a_2(1,1)=64.
\end{equation}
\end{lemma}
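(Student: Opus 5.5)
The plan is to read off both coefficients directly from the explicit numerators of Proposition~\ref{prop:Pexplicit}. Since $\Delta^{-1}=q^{-1}(1+24q+O(q^2))$, the $q^{1/8}\mathfrak e_1$ coefficient of $\Pcal_j=\Delta^{-1}N_j$ is
\[
 a_j(1,1)=[q^{9/8}\mathfrak e_1]N_j+24\,[q^{1/8}\mathfrak e_1]N_j .
\]
The proof of Proposition~\ref{prop:Pexplicit} already shows that $N_j$ has no term below $q$ in the $r=1$ component, so $[q^{1/8}\mathfrak e_1]N_j=0$. The correction term therefore drops out, and only the $q^{9/8}$ coefficient of $N_j$ in the $r=1$ component is needed.

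Next I truncate every ingredient at order $q^{9/8}$. In the $r=1$ component, the representatives $n\equiv1\pmod 4$ with $n^2\le9$ are $n=1$ and $n=-3$. Hence
\[
 \theta_1=q^{1/8}+q^{9/8}+O(q^{25/8}).
\]
Applying $\D$ once and twice gives
\[
 \D\theta_1=\tfrac18q^{1/8}+\tfrac98q^{9/8},\qquad
 \D^2\theta_1=\tfrac1{64}q^{1/8}+\tfrac{81}{64}q^{9/8},
\]
each up to $O(q^{25/8})$. For the Eisenstein series I only need the linear terms
\[
 E_2=1-24q,\qquad E_4=1+240q,\qquad E_6=1-504q,
\]
so that $E_4^2=1+480q$, $E_6E_2=1-528q$, and so on. I then substitute these into the collected forms \eqref{eq:N0} and \eqref{eq:N2}, rather than into the $A_i$, because the coefficients there are already combined.

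For $N_0$, the three terms $32E_6\D^2\boldsymbol\theta$, $-(12E_4^2+8E_6E_2)\D\boldsymbol\theta$, and the $\boldsymbol\theta$-bracket contribute $-211.5$, $-214.5$ and $-214$ at $q^{9/8}$. Their sum is $-640$, as claimed. As a check, the same truncation at $q^{1/8}$ gives $\tfrac12-\tfrac{5}{2}+2=0$, which confirms the vanishing used above. For $N_2$ I do the same with \eqref{eq:N2}, keeping track of the overall factor $\tfrac13$ and the inner $\tfrac1{12}$. The last bracket there is $E_2(E_6E_2-E_4^2)/12=E_2\cdot 2\D E_6/12$, whose expansion begins $-84q$, so it contributes only through the $q^{1/8}$ term of $\theta_1$. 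The target value is $64$.

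The main obstacle is purely bookkeeping: tracking the fractional coefficients and the normalisation conventions consistently. Three points need care. First, the factor $2$ in the $q^{-1}\mathfrak e_0$ normalisation must not be reintroduced. Second, $\theta_1$ and $\theta_3$ are equal, so the choice of component $r=1$ is harmless. Third, the $q^{1/8}$ coefficient of $N_2$ in the $r=1$ component must actually vanish, since otherwise the $24$-correction reappears. I would cross-check this vanishing against the table of leading coefficients in the proof of Proposition~\ref{prop:Pexplicit} before trusting the final value.
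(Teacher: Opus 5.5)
Your proposal is correct and follows essentially the paper's route: the paper also reads off the $q^{9/8}\mathfrak e_1$ coefficient of the numerator, using the vanishing at $q^{1/8}$ so that the constant $24$ in $\Delta^{-1}$ drops out, and differs from you only in tabulating the coefficients of $A_0,A_1,A_2$ instead of using the collected forms \eqref{eq:N0}--\eqref{eq:N2}. Your $N_0$ numbers check out, and the $N_2$ computation you left as a target does close: its three terms contribute $-\tfrac{141}{4}$, $\tfrac{509}{4}$ and $-28$ at $q^{9/8}$ (and $\tfrac1{12}$, $-\tfrac1{12}$, $0$ at $q^{1/8}$), giving $64$.
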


\begin{proof}
In component $r=1$, the coefficients of $(A_0,A_1,A_2)$ at exponents
$1/8$ and $9/8$ are
\[
 \begin{array}{c|ccc}
 &A_0&A_1&A_2\\ \hline
 q^{1/8}&1&1/12&-1/144\\
 q^{9/8}&-263&505/12&839/144.
 \end{array}
\]
The two linear combinations in \eqref{eq:P0}--\eqref{eq:P2} vanish in
degree $1/8$ and equal $-640$, respectively $64$, in degree $9/8$.
Since $\Delta^{-1}=q^{-1}+24+O(q)$, this is exactly
\eqref{eq:first-P-coeff}.
\end{proof}

\section{Canonical level-two CM forms}

For $d\in\{-8,-4\}$ and $r\in\{0,2\}$ with $d\equiv r^2\pmod8$,
let
\[
 \Qcal_{d,r}^{(2)}=
 \left\{[a,b,c]:
 \begin{array}{l}
 a,b,c\in\Z,\ a>0,\ b^2-4ac=d,\\
 2\mid a,\ b\equiv r\pmod4
 \end{array}\right\}.
\]
Define the weight-six canonical CM form
\begin{equation}\label{eq:Fdr}
 F_{d,r}(\tau)=\frac{|d|^{5/2}}{4\pi^3}
 \sum_{Q=[a,b,c]\in\Qcal_{d,r}^{(2)}}Q(\tau,1)^{-3}.
\end{equation}
This is the specialization $f_{3,d,r,1,1}$ of the higher-level forms of
L\"obrich--Schwagenscheidt \cite[Section~6]{LS}.  It is a meromorphic
modular form of weight $6$ for $\Gamma_0(2)$, vanishing at the cusps,
with poles at the associated CM points.

\begin{lemma}[The two CM orbits]\label{lem:CM-orbits}
The set $\Qcal_{-8,0}^{(2)}$ is a single $\Gamma_0(2)$-orbit represented
by
\[
 Q_8=[2,0,1],\qquad \alpha_8=\frac{\ii}{\sqrt2}.
\]
The set $\Qcal_{-4,2}^{(2)}$ is a single $\Gamma_0(2)$-orbit represented
by
\[
 Q_4=[2,2,1],\qquad \alpha_4=\frac{-1+\ii}{2}.
\]
\end{lemma}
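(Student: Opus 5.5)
The plan is to reduce the $\Gamma_0(2)$-orbit question to $\mathrm{SL}_2(\Z)$-class numbers plus a count of orbits under the index-$3$ subgroup. First I check that the representatives lie in the stated sets. For $Q_8=[2,0,1]$ the discriminant is $0-8=-8$, the leading coefficient $a=2$ is even, and $b=0\equiv0\pmod4$. For $Q_4=[2,2,1]$ the discriminant is $4-8=-4$, $a=2$ is even, and $b=2\equiv2\pmod4$. The roots of $Q(\tau,1)=0$ in the upper half-plane are $\ii/\sqrt2$ and $(-2+2\ii)/4=(-1+\ii)/2$, which gives the stated CM points. Both sets are stable under $\Gamma_0(2)$: the standard action preserves $a\bmod2$ (more precisely $a\bmod N$ when $N\mid a$), preserves $b\bmod 2N$, and preserves positive definiteness. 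The second invariance is exactly the Heegner condition used by L\"obrich--Schwagenscheidt.

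For transitivity I use the classical Gross--Kohnen--Zagier bijection. Primitive forms $[a,b,c]$ with $N\mid a$ and $b\equiv r\pmod{2N}$, taken modulo $\Gamma_0(N)$, correspond bijectively to $\mathrm{SL}_2(\Z)$-classes of forms of discriminant $d$. Both $d=-8$ and $d=-4$ are fundamental, so every form in these sets is primitive. Since $h(-8)=h(-4)=1$, each set would then be a single orbit.

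To avoid relying on that citation, I would also run a direct argument. The cosets of $\Gamma_0(2)$ in $\mathrm{SL}_2(\Z)$ are represented by $I$, $S$ and $ST$. Every positive definite form of discriminant $-8$ is $\mathrm{SL}_2(\Z)$-equivalent to $[1,0,2]$, and every form of discriminant $-4$ is equivalent to $[1,0,1]$. Hence every element of $\Qcal_{d,r}^{(2)}$ has the shape $g\cdot Q_0\cdot\gamma$, where $\gamma\in\Gamma_0(2)$ and $Q_0$ runs over the at most three transforms of the reduced form by the coset representatives. I then compute those transforms and keep only the ones satisfying $2\mid a$ and $b\equiv r\pmod4$.

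For $[1,0,2]$ the transforms are $[1,0,2]$, $[2,0,1]$ and $[3,\pm2,1]$. Only $[2,0,1]=Q_8$ survives. For $[1,0,1]$ the transforms are $[1,0,1]$ and $[2,\pm2,1]$. The two forms $[2,2,1]$ and $[2,-2,1]$ are related by $T=\left(\begin{smallmatrix}1&1\\0&1\end{smallmatrix}\right)\in\Gamma_0(2)$ up to the sign convention. They are also related through the stabilizer of $[1,0,1]$, which has order $4$ and contains $S$. So the $d=-4$ set is also a single orbit.

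The main obstacle is bookkeeping at $d=-4$, where the stabilizer is nontrivial. I must check that the orbit count is one and not two, and in particular that the $b\equiv2\pmod4$ condition does not split the orbit. I would settle this by writing the explicit matrix in $\Gamma_0(2)$ that carries $[2,-2,1]$ to $[2,2,1]$. A natural candidate is the translation $\tau\mapsto\tau+1$, which sends $b\mapsto b\pm2a=b\pm4$. If that fails, I would use the order-$4$ stabilizer of $\ii$ conjugated into $\Gamma_0(2)$ at the point $\alpha_4$.
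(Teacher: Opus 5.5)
Your argument is correct and uses the same two ingredients as the paper: class number one for $d=-8,-4$, and a mod-$2$ test on the three cosets of $\Gamma_0(2)$ in $\mathrm{SL}_2(\Z)$. The paper runs the test more economically.

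It takes $Q_8$ and $Q_4$ themselves as the $\mathrm{SL}_2(\Z)$-class representatives. For $\gamma=\left(\begin{smallmatrix}a&b\\c&d\end{smallmatrix}\right)$, the new leading coefficients $Q_8(a,c)=2a^2+c^2$ and $Q_4(a,c)=2a^2+2ac+c^2$ are even exactly when $c$ is even. So any $\gamma$ carrying the representative into the level-two set already lies in $\Gamma_0(2)$, and the condition on $b\bmod 4$ then holds automatically. Transitivity follows in one line. There is no enumeration of transforms of the reduced form and no stabilizer bookkeeping at $d=-4$, so the obstacle you single out never arises.

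Your GKZ route is legitimate here: the discriminants are fundamental, $N=2$, and $r\equiv-r\pmod 4$, so there is no $\pm r$ splitting. But it imports a citation for what is a parity check.

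In your direct route, fix the side of the action.
\begin{itemize}
\item For the right action $Q\mapsto Q\circ\sigma$, you must write $\sigma=g\gamma$ with $g$ running over \emph{left} coset representatives, namely $I,S,TS$.
\item Your list $I,S,ST$ consists of right coset representatives; $S$ and $ST$ lie in the same left coset, since $S^{-1}ST=T$.
\item With $I,S,ST$ and the right action, $d=-4$ gives $[1,0,1],[1,0,1],[1,2,2]$, and nothing survives.
\end{itemize}
With consistent conventions exactly one transform survives in each case: $[2,0,1]$ for $d=-8$, and $[2,-2,1]$ or $[2,2,1]$ for $d=-4$. So you can drop the $\pm$ hedges, the stabilizer remark, and the stray $g$ in $g\cdot Q_0\cdot\gamma$. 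Your translation check is right: $T$ sends $[2,-2,1]$ to $[2,-2+4,1]=[2,2,1]$.
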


\begin{proof}
Both discriminants have class number one.  If
$Q_8|\gamma\in\Qcal_{-8,0}^{(2)}$ for
$\gamma=\left(\begin{smallmatrix}a&b\\c&d\end{smallmatrix}\right)$,
then its leading coefficient $2a^2+c^2$ is even, hence $c$ is even.
Thus $\gamma\in\Gamma_0(2)$; its middle coefficient is automatically
divisible by $4$.

For $Q_4$, the leading coefficient is $2a^2+2ac+c^2$.  Its evenness
again forces $c$ even, and then $a,d$ are odd.  The transformed middle
coefficient is
\[
 2(2ab+ad+bc+cd)\equiv2\pmod4.
\]
Thus the full class-number-one orbit does not split further under the
stated level conditions.
\end{proof}

\begin{lemma}[Fricke symmetry]\label{lem:F-fricke}
For $F=F_{-8,0}$ or $F_{-4,2}$,
\begin{equation}\label{eq:F-fricke}
 F|_6W_2=F.
\end{equation}
\end{lemma}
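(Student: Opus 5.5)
Since $F_{d,r}$ is a sum of $Q(\tau,1)^{-3}$ over $\Qcal^{(2)}_{d,r}$, the natural route is a direct computation of how the summands transform under $W_2$. We show that $W_2$ maps the index set to itself and that each summand is carried, with the determinant normalization, to another summand with coefficient exactly one. Write $W_2=\left(\begin{smallmatrix}0&-1\\2&0\end{smallmatrix}\right)$ and compute
\[
 Q\!\left(-\tfrac1{2\tau},1\right)=a\frac1{4\tau^2}-b\frac1{2\tau}+c
 =\frac{1}{4\tau^2}\bigl(a-2b\tau+4c\tau^2\bigr).
\]
For $Q=[a,b,c]$ with $2\mid a$, put $a=2a'$ and define
\[
 Q'=[2c,-b,a'].
\]
Then $a-2b\tau+4c\tau^2=2(2c\tau^2-b\tau+a')=2Q'(\tau,1)$, so
\[
 Q\!\left(-\tfrac1{2\tau},1\right)=\frac{Q'(\tau,1)}{2\tau^2}.
\]

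Next we check that $Q\mapsto Q'$ is a bijection of $\Qcal^{(2)}_{d,r}$ onto itself. The discriminant is $b^2-8ca'=b^2-4ac=d$. The leading coefficient $2c$ is even. The middle coefficient satisfies $-b\equiv r\pmod 4$ because $r\in\{0,2\}$, so $-r\equiv r$.

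The one delicate point is positivity: we need $c>0$. Since $d<0$, the form $Q$ is definite, and $a>0$ forces $c>0$. The map is an involution up to the identification: applying it twice gives $[2a',b,c]=Q$. It is therefore a bijection of the index set.

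Now apply the slash operator. We have $(F|_6W_2)(\tau)=2^{3}(2\tau)^{-6}F(-1/(2\tau))$, and
\[
 Q\!\left(-\tfrac1{2\tau},1\right)^{-3}=8\tau^6\,Q'(\tau,1)^{-3}.
\]
So each summand contributes
\[
 2^3\cdot 2^{-6}\tau^{-6}\cdot 8\tau^6\,Q'(\tau,1)^{-3}=Q'(\tau,1)^{-3}.
\]
Summing over the bijection gives $F|_6W_2=F$, including the constant $|d|^{5/2}/(4\pi^3)$.

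The only analytic caveat is that the sum $\sum Q(\tau,1)^{-3}$ converges absolutely for weight $6>2$, as in \cite{LS}. Away from the CM poles this justifies rearranging the summation. The identity then holds as meromorphic functions.

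The computation is routine; the step to watch is the sign and power of $2$ in the normalization, which must cancel exactly. As a cross-check, the same bookkeeping is consistent with \eqref{eq:uTfricke} and the asserted identification $F_{-8,0}=-64C_{6a}$, since by \eqref{eq:C6acollapse} $C_{6a}$ is Fricke-invariant. Indeed, under $u\mapsto 1/u$ we have $\Tcal t\mapsto(-u\Tcal)(u^{-1}t)\cdot$const, and the rational factor $(5+22u+5u^2)/(1-u)^3$ picks up $-u^{-1}$.
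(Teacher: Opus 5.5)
Your proof is correct and is essentially the paper's argument: the involution $[a,b,c]\mapsto[2c,-b,a/2]$ of $\Qcal^{(2)}_{d,r}$, together with $Q(-1/(2\tau),1)=Q'(\tau,1)/(2\tau^2)$, whose factor $(2\tau^2)^3$ exactly cancels the normalized slash factor $2^3(2\tau)^{-6}$. You also spell out what the paper leaves implicit, namely $c>0$ by definiteness, $-r\equiv r\pmod 4$, and absolute convergence.

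The closing cross-check is not needed, and as a proof it would be circular, since the identification with $C_{6a}$ is derived from this lemma. Its bookkeeping is also off. In fact $t\mapsto 2^{-12}t^{-1}=u^{-2}t$, so $\Tcal t\mapsto -u^{-1}\Tcal t$. The factor $(5+22u+5u^2)/(1-u)^3$ picks up $-u$, not $-u^{-1}$, and only with these exponents do the two factors cancel.
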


\begin{proof}
The involution
\[
 [a,b,c]\longmapsto[2c,-b,a/2]
\]
permutes either indexing set.  Moreover,
\[
 Q\!\left(-\frac1{2\tau},1\right)
 =\frac1{2\tau^2}[2c,-b,a/2](\tau,1).
\]
The factor $(2\tau^2)^3$ is exactly canceled by the normalized
weight-six slash factor, proving \eqref{eq:F-fricke}.
\end{proof}

We now identify the two candidates with these canonical forms.

\begin{theorem}[CM identifications]\label{thm:CM-identification}
One has the exact identities
\begin{equation}\label{eq:CM-identification}
 F_{-8,0}=-64C_{6a},
 \qquad
 F_{-4,2}=32C_{6b}.
\end{equation}
\end{theorem}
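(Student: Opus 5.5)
The plan is to compare both sides as meromorphic modular forms of weight $6$ on $\Gamma_0(2)$: divide by $\Tcal$, which has no zeros in $\mathbb H$, and reduce to an identity between modular functions, i.e.\ rational functions of the Hauptmodul $u=64t$. By Corollary~\ref{cor:collapse},
\[
 \frac{C_{6a}}{\Tcal}=\frac{u}{64}\,\frac{5+22u+5u^2}{(1-u)^3},\qquad
 \frac{C_{6b}}{\Tcal}=\frac{u}{64}\,\frac{1-u}{(1+u)^2}.
\]
We first locate the CM points on the $u$-line. Under $u\mapsto1/u$ from \eqref{eq:uTfricke}, the point $\alpha_8=\ii/\sqrt2$ is Fricke-fixed, so $u(\alpha_8)=\pm1$. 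The point $\alpha_4$ is the elliptic point of order two of $\Gamma_0(2)$. Using \eqref{eq:J2} and $E_4(\ii)=0$ transported, or more simply the fact that $E_4=H^4(1+4u)$ and $J^2=H^4(1+u)$, we expect $u(\alpha_4)=-1$ (there $J$ vanishes) and $u(\alpha_8)=+1$ (there $E_6$-type vanishing, $1-512t=1-8u$ being irrelevant). These values match the denominators $(1-u)^3$ and $(1+u)^2$.

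Next we compute the function $G=F/\Tcal$ for each $F$. Since $F$ is holomorphic on $\mathbb H$ away from the $\Gamma_0(2)$-orbit of the single CM point (Lemma~\ref{lem:CM-orbits}), $G$ is a rational function of $u$ with poles only at $u=\pm1$ and possibly at the cusps. At $\infty$ both $F$ and $\Tcal t$ vanish, and $F=O(q)$ forces $G(u)=O(u)$. At the cusp $0$ we use Lemma~\ref{lem:F-fricke} and \eqref{eq:uTfricke}: $G(1/u)=F|W_2/(\Tcal|W_2)=-G(u)/u$. Hence $G$ is determined by its polar data at $u=\pm1$ plus this functional equation. The pole order at $\alpha$ is $3$ in $Q(\tau,1)^{-3}$, and it is measured in the local parameter $\tau-\alpha$. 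At the elliptic point $\alpha_4$ the local parameter of $u$ is $(\tau-\alpha_4)^2$, which gives pole order $3/2$, rounded via the vanishing of $\Tcal$ (through $J$) to order $2$ in $u+1$. At $\alpha_8$, where $\Tcal\neq0$, the pole in $u-1$ has order $3$. The ansätze are therefore $G_a=u\,p(u)/(1-u)^3$ with $\deg p\le2$, and $G_b=u\,(a+bu)/(1+u)^2$. The functional equation forces $p$ to be palindromic and $a=-b$ up to sign conventions. Only one or two free constants then remain, fixed by the leading Laurent coefficients.

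Finally we pin down the constants. For the $C_{6b}$ case a single scalar remains; it is fixed by the leading singular coefficient at $\alpha_4$, which is computed from the orbit sum in \eqref{eq:Fdr} and the stabilizer count, or alternatively from the first $q$-coefficient of $F_{-4,2}$ via the Löbrich--Schwagenscheidt coefficient formula with $\Pcal_2$, using $a_2(1,1)=64$ from Lemma~\ref{lem:first-P-coeff}. For the $C_{6a}$ case, palindromic $p$ has two parameters. We fix them by the first $q$-coefficient (via $a_0(1,1)=-640$ with $\Pcal_0$) together with one more piece of data. That extra datum is either the second coefficient or the fact that $F$ has zero residue-type term of order two at $\alpha_8$, since the Laurent expansion of $Q^{-3}$ is pure. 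The main obstacle is this normalization: computing the precise polar constants, with the $|d|^{5/2}/(4\pi^3)$ factor and the CM values of $H$ and $J$, in a way that is rigorous rather than numerical. I would first try the coefficient route, using the theta-lift formula to produce two $q$-coefficients of each $F$, since that avoids transcendental periods entirely.
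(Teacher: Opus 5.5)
Your skeleton is the paper's: divide by $\Tcal$, write the quotient as a rational function of the Hauptmodul $u$, and use $R(1/u)=-R(u)/u$ from Fricke invariance. Bounding the pole orders at $u=\pm1$ then gives $u(a+bu+au^2)/(1-u)^3$ and $a\,u(1-u)/(1+u)^2$, and the leading constant comes from the first Fourier coefficient via the L\"obrich--Schwagenscheidt formula and Lemma~\ref{lem:first-P-coeff}.

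The real difference is how you fix the second constant for $F_{-8,0}$. The paper matches the leading pole at $\alpha_8$. Your worry about CM values is unfounded there: $\Tcal=J^3/(1+u)$ and $\dd u/\dd\tau=2\pi\ii\,uJ$ make $J(\alpha_8)$ cancel. So $\Tcal\,u\,p(u)/(1-u)^3$ has polar coefficient $-\frac{\ii}{16\pi^3}p(1)$, to be compared with $\frac{2\ii}{\pi^3}$ from \eqref{eq:Fdr}.

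Your preferred coefficient route also works.
\begin{itemize}
\item From \eqref{eq:P0}, the $\mathfrak e_2$-component of the numerator has $q^{1/2}$ coefficient $0$ and $q^{3/2}$ coefficient $-10032$, so $a_0(4,2)=-10032$.
\item Then $2c_F(2)=4\bigl(a_0(1,1)+8a_0(4,2)\bigr)$ gives $c_F(2)=-161792$.
\item With $\Tcal u=64q+2048q^2+\cdots$, the conditions $64a=-320$ and $2048a+4096(b+3a)=-161792$ force $a=-5$ and $b=-22$.
\end{itemize}
Your route costs one more coefficient of $\Pcal_0$ but uses a single normalization, that of LS. The paper's route needs no further expansion, but it relies on the constant $|d|^{5/2}/(4\pi^3)$ in \eqref{eq:Fdr} agreeing with LS's normalization.

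Three points need repair.

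First, $\Tcal$ is not zero-free on $\mathbb H$: it has a simple zero at $\alpha_4$, as you yourself use later.
\begin{itemize}
\item Omitting $(1+u)$ from the denominator of $F_{-8,0}/\Tcal$ therefore needs $F_{-8,0}(\alpha_4)=0$. This holds because a holomorphic weight-six form vanishes at an order-two elliptic point ($\ii^6=-1$). Equivalently, a $\Gamma_0(2)$-invariant function has even order in $\tau-\alpha_4$.
\item The bound ``order at most two in $u+1$'' for $F_{-4,2}/\Tcal$ needs the zero of $\Tcal$ to be exactly simple. This follows from $\ord_{\alpha_4}(1+u)=2$ (an elliptic point of order two for a Hauptmodul) together with $\D u=uJ$.
\end{itemize}

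Second, justify $u(\alpha_8)=+1$ by positivity of $t$ on the imaginary axis, or by injectivity of $u$ on $X_0(2)$ together with $u(\alpha_4)=-1$. Your $E_6$ remark proves nothing.

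Third, discard the alternative second datum.
\begin{itemize}
\item In the coordinate $\tau-\alpha_8$, the expansion of $Q_8^{-3}$ is not pure, so the claimed vanishing is false there.
\item In the elliptic coordinate $w=(\tau-\alpha_8)/(\tau-\bar\alpha_8)$, the $w^{-2}$ term vanishes for every member of your two-parameter family, so it gives no equation. At the Fricke-fixed point, $W_2$ acts by $w\mapsto-w$ with automorphy factor $(-\ii)^6=-1$, so Fricke invariance kills all even-index terms.
\item Only the $w^{-1}$ term would give a genuine equation, and computing it requires second-order local data of $u$ and $\Tcal$ at $\alpha_8$.
\end{itemize}

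With these repairs, and using the second-coefficient route, your argument is complete.
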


\begin{proof}
Write $R=F/\Tcal$.  Since $u$ is a Hauptmodul for $X_0(2)$,
$R$ is a rational function of $u$.  From
\eqref{eq:uTfricke} and Lemma~\ref{lem:F-fricke},
\begin{equation}\label{eq:R-functional}
 R(1/u)=-\frac1uR(u).
\end{equation}
The forms $F$ vanish at both cusps, while $\Tcal$ has order $0$ at
$\infty$ and order $1$ at $0$.  Hence $R(0)=0$ and $R$ is finite at
$u=\infty$.

The point $\alpha_8$ is fixed by $W_2$.  Equation
$t|W_2=2^{-12}t^{-1}$ and positivity on the imaginary axis give
$u(\alpha_8)=1$.  The form $F_{-8,0}$ has a triple pole there and no
other pole modulo $\Gamma_0(2)$.  At the order-two elliptic point
$\alpha_4$, the weight-two form $J$ vanishes; hence \eqref{eq:J2} gives
$u(\alpha_4)=-1$.  If $m=\ord_{\alpha_4}(1+u)$, then
$\D u=uJ$ gives $\ord_{\alpha_4}(J)=m-1$, while \eqref{eq:J2}
gives $2(m-1)=m$.  Thus $m=2$ and $\Tcal=H^4J$ has a simple zero.
The form $F_{-8,0}$ is holomorphic at $\alpha_4$, and every local
holomorphic weight-six form vanishes there because the elliptic
stabilizer has automorphy factor $\ii$ and $\ii^6=-1$.  Therefore
$F_{-8,0}/\Tcal$ is holomorphic.  Therefore
\[
 R_8(u)=\frac{A(u)}{(1-u)^3},\qquad \deg A\le3,
 \qquad A(0)=0.
\]
The functional equation \eqref{eq:R-functional} is equivalent to
$A(u)=u^4A(1/u)$, hence
\begin{equation}\label{eq:R8-shape}
 R_8(u)=\frac{u(a+bu+au^2)}{(1-u)^3}.
\end{equation}

For $F_{-4,2}$, the only pole is the triple pole at $\alpha_4$.  Since
$1+u$ has a double zero and $\Tcal$ a simple zero there,
$F_{-4,2}/\Tcal$ has a pole of order at most two in the $u$-coordinate.
Thus
\[
 R_4(u)=\frac{A(u)}{(1+u)^2},\qquad \deg A\le2,
 \qquad A(0)=0.
\]
Equation \eqref{eq:R-functional} gives $A(u)=-u^3A(1/u)$, so
\begin{equation}\label{eq:R4-shape}
 R_4(u)=a\frac{u(1-u)}{(1+u)^2}.
\end{equation}

It remains to determine the constants.  The higher-level Fourier formula
of \cite[Proposition~6.1]{LS}, specialized to
$N=2$, $k=3$, $D=\rho=1$, gives at $n=1$
\begin{equation}\label{eq:F-first-from-P}
 2[q]F_{-8,0}=a_0(1,1),
 \qquad
 2[q]F_{-4,2}=a_2(1,1).
\end{equation}
By Lemma~\ref{lem:first-P-coeff},
\begin{equation}\label{eq:F-first}
 [q]F_{-8,0}=-320,
 \qquad
 [q]F_{-4,2}=32.
\end{equation}
Since $u=64q+O(q^2)$ and $\Tcal=1+O(q)$,
\eqref{eq:R8-shape} gives $a=-5$, while
\eqref{eq:R4-shape} gives $a=1/2$.

For $R_8$ one further constant remains.  Near
$\alpha_8=\ii/\sqrt2$, only $Q_8=2\tau^2+1$ contributes to the leading
pole.  The normalization in \eqref{eq:Fdr} gives
\begin{equation}\label{eq:F8-local}
 F_{-8,0}(\tau)
 \sim\frac{32\sqrt2}{\pi^3}(2\tau^2+1)^{-3}
 \sim\frac{2\ii}{\pi^3}(\tau-\alpha_8)^{-3}.
\end{equation}
On the other hand, $\D u=uJ$ and
$\Tcal=J^3/(1+u)$.  Formula \eqref{eq:R8-shape} therefore has leading
coefficient
\begin{equation}\label{eq:R8-local}
 -\frac{\ii}{16\pi^3}(2a+b)(\tau-\alpha_8)^{-3}.
\end{equation}
Comparison with \eqref{eq:F8-local} gives $2a+b=-32$, hence $b=-22$.
Thus
\[
 F_{-8,0}=-\Tcal\frac{u(5+22u+5u^2)}{(1-u)^3}
 =-64C_{6a},
\]
and
\[
 F_{-4,2}=\frac12\Tcal\frac{u(1-u)}{(1+u)^2}
 =32C_{6b}.
\]
\end{proof}

\section{The full odd-prime divisibility}

We now use the complete coefficient formula, not merely its first term.
For $j\in\{0,2\}$ retain the notation
\[
 \Pcal_j=\sum_{r\bmod4}\sum_n a_j(n,r)q^{n/8}\mathfrak e_r.
\]

\begin{proposition}[Specialized theta-lift coefficient formula]
\label{prop:lift-formula}
For every $n\ge1$,
\begin{align}
 2c_{F_{-8,0}}(n)
 &=n^2\sum_{s\mid n}s^3a_0(s^2,s),
 \label{eq:F8-coeff}\\
 2c_{F_{-4,2}}(n)
 &=n^2\sum_{s\mid n}s^3a_2(s^2,s),
 \label{eq:F4-coeff}
\end{align}
where the second argument is read modulo $4$.
\end{proposition}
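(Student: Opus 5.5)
The plan is to specialize the general Fourier coefficient formula of \cite[Proposition~6.1]{LS} for the higher-level CM forms $f_{k,D,r,d,\rho}$, with $N=2$, $k=3$, fundamental discriminant $D=1$, $\rho=1$, using the vector-valued input $\Pcal_j=P_{-3/2,|d|,r}$ identified in Proposition~\ref{prop:Pexplicit}. In that formula the $n$-th coefficient of $f_{k,d,r,D,\rho}$ is a finite sum over divisors, in which the theta-lift coefficient of the weakly holomorphic input at index $(s^2|D|,\,s\rho)$ is weighted by $n^{k-1}s^{k}$ and by the twisting character $\left(\frac{D}{n/s}\right)$. Since $D=1$ the character is trivial. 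With $k=3$ the weights become $n^2s^3$, and the overall factor $2$ in the formula matches the normalization already used at $n=1$ in \eqref{eq:F-first-from-P}. The identifications in Theorem~\ref{thm:CM-identification} and \eqref{eq:P-identification} then transfer this formula directly to \eqref{eq:F8-coeff} and \eqref{eq:F4-coeff}.

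The concrete steps are as follows.
\begin{enumerate}
\item Write down the \cite{LS} formula in its level-$N$ lattice form. The relevant finite quadratic module is $\Z/2N\Z$ with $Q(r)=r^2/(4N)$, which for $N=2$ is exactly our $(A,Q)$ with $Q(r)=r^2/8$. So $\rho_2$ is the correct Weil representation, and the dual or conjugation conventions agree with those of Proposition~\ref{prop:Pexplicit}. If the conventions are conjugate, the symmetry $\mathfrak e_r\leftrightarrow\mathfrak e_{-r}$ together with the evenness $a_j(n,r)=a_j(n,-r)$ of $\Pcal_j$ (inherited from $\boldsymbol\theta$) absorbs the discrepancy.
\item Check that the index $q^{s^2/8}\mathfrak e_s$ in our normalization, $q^{n/8}$ with $n=s^2$, is exactly the coefficient the formula extracts: the exponent $s^2|D|/(4N)$ equals $s^2/8$, and the component is $s\rho\equiv s\pmod 4$.
\item Confirm the normalization constant by the case $n=1$, which must reproduce \eqref{eq:F-first-from-P}. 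This check is already consistent with \eqref{eq:F-first}.
\end{enumerate}

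The main obstacle is bookkeeping: making sure the normalization of $F_{d,r}$ in \eqref{eq:Fdr}, namely the factor $|d|^{5/2}/(4\pi^3)$ and the choice of quadratic forms with $a>0$ only, versus summing over $\pm Q$, agrees with the normalization of $f_{k,d,r,D,\rho}$ in \cite{LS}. A mismatch would change the constant $2$ or introduce a sign $(-1)^k$. To settle this I would compare the principal parts: \eqref{eq:principal-parts} gives $P_{-3/2,|d|,r}$ the leading term $2q^{d/8}\mathfrak e_r$, and \cite{LS} normalizes its Poincaré series by the symmetrized principal part $q^{d/4N}(\mathfrak e_r+\mathfrak e_{-r})$, which for $r\in\{0,2\}$ equals $2q^{d/8}\mathfrak e_r$. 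The two therefore match, and the $n=1$ check then fixes any remaining global constant. Since both sides of each identity are determined by the same formula once $n=1$ agrees, the general $n$ follows.
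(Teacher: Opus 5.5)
Your proposal is correct and matches the paper's proof, which likewise just specializes \cite[Proposition~6.1]{LS} to $N=2$, $k=3$, $D=\rho=1$, so that the weight $n^5m^{-3}$ with $m=n/s$ becomes $n^2s^3$ and the factor $2$ comes from $\mathfrak e_r=\mathfrak e_{-r}$ for $r\in\{0,2\}$. Your $n=1$ ``normalization check'' is not an independent confirmation, because \eqref{eq:F-first-from-P} was itself derived from the same formula. It is harmless but adds nothing beyond the specialization; comparing a second coefficient such as $n=2$ against the known expansions of $C_{6a}$ and $C_{6b}$ would give a genuine check.
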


\begin{proof}
Proposition~6.1 of \cite{LS} states in the present case that
\[
 2c_F(n)=n^5\sum_{m\mid n}m^{-3}
 a\!\left(\frac{n^2}{m^2},\frac nm\right).
\]
Putting $s=n/m$ gives \eqref{eq:F8-coeff} and
\eqref{eq:F4-coeff}.  The factor $2$ on the left occurs because
$r=-r$ in $\Z/4\Z$ for $r=0,2$.
\end{proof}

\begin{corollary}[Odd-prime magneticity]\label{cor:odd}
For every $n\ge1$,
\begin{equation}\label{eq:odd}
 \frac{c_{6a}(n)}{n^2},\qquad
 \frac{c_{6b}(n)}{n^2}\in\Z[1/2].
\end{equation}
\end{corollary}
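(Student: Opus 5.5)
The corollary is a direct assembly of three earlier results. Theorem~\ref{thm:CM-identification} expresses $C_{6a}$ and $C_{6b}$ as scalar multiples of $F_{-8,0}$ and $F_{-4,2}$. Proposition~\ref{prop:lift-formula} gives the coefficients of these CM forms in terms of the coefficients of the Poincar\'e series $\Pcal_0$ and $\Pcal_2$. Proposition~\ref{prop:Pexplicit} shows that all coefficients of $\Pcal_0$ and $\Pcal_2$ lie in $\Z[1/2]$.

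First, by \eqref{eq:CM-identification} we have
\[
 c_{6a}(n)=-\tfrac1{64}c_{F_{-8,0}}(n),
 \qquad
 c_{6b}(n)=\tfrac1{32}c_{F_{-4,2}}(n).
\]
The scalars $-1/64$ and $1/32$ are units in $\Z[1/2]$. It therefore suffices to show that $c_{F_{-8,0}}(n)/n^2$ and $c_{F_{-4,2}}(n)/n^2$ lie in $\Z[1/2]$.

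Second, divide \eqref{eq:F8-coeff} and \eqref{eq:F4-coeff} by $2n^2$. This gives
\[
 \frac{c_{F_{-8,0}}(n)}{n^2}=\frac12\sum_{s\mid n}s^3a_0(s^2,s),
\]
and the analogous formula with $a_2$. Each summand is an integer $s^3$ times a coefficient $a_j(s^2,s)$. The index $(s^2,s\bmod 4)$ is a legitimate Fourier index, since $s^2/8\equiv Q(s)\pmod{\Z}$, so $a_j(s^2,s)$ is a genuine Fourier coefficient of $\Pcal_j$. By Proposition~\ref{prop:Pexplicit} it lies in $\Z[1/2]$. The prefactor $\tfrac12$ is also harmless in $\Z[1/2]$. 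Hence both quotients lie in $\Z[1/2]$, which is \eqref{eq:odd}.

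There is no genuine obstacle, since all the substantive input (the weak holomorphy and the $2$-integrality of $\Pcal_j$, and the applicability of the L\"obrich--Schwagenscheidt formula) is already established. The one point to check carefully is bookkeeping in Proposition~\ref{prop:lift-formula}: the sum must involve only the positive-index coefficients $a_j(s^2,s)$ with $s\ge1$, so the principal parts play no role. Even if they did enter, those coefficients lie in $\Z[1/2]$ as well.
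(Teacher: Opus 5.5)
Your proposal is correct and matches the paper's proof: the $\Z[1/2]$-integrality of the coefficients $a_j(s^2,s)$ from Proposition~\ref{prop:Pexplicit} is fed into \eqref{eq:F8-coeff}--\eqref{eq:F4-coeff} to get $c_F(n)/n^2\in\Z[1/2]$, and the factor $\tfrac12$ and the $2$-power scalars in \eqref{eq:CM-identification} are units in $\Z[1/2]$. Your extra checks, that $(s^2,s\bmod 4)$ is a legitimate Fourier index and that principal parts do not enter, are bookkeeping the paper leaves implicit.
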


\begin{proof}
All coefficients $a_j(s^2,s)$ lie in $\Z[1/2]$ by
Proposition~\ref{prop:Pexplicit}.  Equations
\eqref{eq:F8-coeff}--\eqref{eq:F4-coeff} therefore show that
$c_F(n)/n^2\in\Z[1/2]$.  The powers of $2$ in the identifications
\eqref{eq:CM-identification} do not change this conclusion.
\end{proof}

In particular, the prime $3$ is already included in
Corollary~\ref{cor:odd}; no separate experimental $3$-adic recurrence is
needed.

\section{A $2$-isogeny trace contraction}

The remaining prime is treated by an infinite-family statement.  For
$f(z)\in\Z_2[[z]]$ define
\begin{equation}\label{eq:Ff}
 \Fcal_f(\tau)=\Tcal(\tau)t(\tau)f(u(\tau)),
 \qquad u=64t.
\end{equation}

\begin{theorem}[$U_2$ contraction]\label{thm:U2}
One has the module inclusion
\begin{equation}\label{eq:U2-contraction}
 U_2\bigl(\Tcal t\,\Z_2[[u]]\bigr)
 \subseteq2^5\Tcal t\,\Z_2[[u]].
\end{equation}
Consequently, if
$\Fcal_f=\sum_{n\ge1}a_f(n)q^n$, then
\begin{equation}\label{eq:U2-valuation}
 v_2\bigl(a_f(2^rm)\bigr)\ge5r
 \qquad(m,r\ge1).
\end{equation}
\end{theorem}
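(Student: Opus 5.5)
The plan is to turn $U_2$ into a trace over the degree-two modular correspondence on $X_0(2)$, written in the Hauptmodul $u$, and then read off the $2$-adic size of the resulting rational functions. Since $u=64q+O(q^2)$, every element of $\Z_2[[u]]$ converges $q$-adically, and $\Tcal t\,\Z_2[[u]]$ is the closure in $\Z_2[[q]]$ of the $\Z_2$-span of the monomials $\Tcal t u^j$, $j\ge0$. $U_2$ is $q$-adically continuous, and the target $2^5\Tcal t\,\Z_2[[u]]$ is closed. Hence it suffices to prove
\[
 U_2(\Tcal t u^j)\in 2^5\,\Tcal t\,\Z_2[[u]]\qquad(j\ge0),
\]
with bounds uniform enough that the images of $\Tcal t u^j$ tend to zero $q$-adically.

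\textbf{Step 1 (modular equation).} Write $U_2F=\tfrac12\sum_{\epsilon=0,1}F((\tau+\epsilon)/2)$. Put $v_\epsilon=u((\tau+\epsilon)/2)$; these are the two roots of a quadratic $v^2-e_1(u)v+e_2(u)=0$ with $e_i\in\mathbf Q(u)$. I would get this quadratic from Landen's transformation for $\lambda$ together with $u=\lambda^2/(4(1-\lambda))$, which follows from the formula $t=\lambda^2/(256(1-\lambda))$ in the proof of Proposition~\ref{prop:dictionary}. As a cross-check I would match $q$-expansions using the Fricke data \eqref{eq:uTfricke}.

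\textbf{Step 2 (weight factor).} Express the automorphy correction in the same variables. Using \eqref{eq:J2}, \eqref{eq:Dt} and $\Tcal=J^3/(1+u)$, each ratio $\Tcal((\tau+\epsilon)/2)\,t((\tau+\epsilon)/2)/(\Tcal(\tau)t(\tau))$ should be a rational function $\Phi(v_\epsilon,u)$. The reason is that $\Tcal t$ is essentially $\D u$ times a weight-four expression in $H$. Concretely, I would write $\Tcal t=\tfrac1{64}\,H^4\,\D u$. The derivative factor transforms as $\D v_\epsilon=\tfrac12\,(dv/du)\,\D u$, with $dv/du$ obtained by implicit differentiation of the modular equation. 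The factor $H^4$ transforms through \eqref{eq:Hhyper} and the quadratic transformation for ${}_2F_1$.

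Combining Steps 1 and 2 gives
\[
 U_2(\Tcal t u^j)=\Tcal t\cdot\tfrac12\sum_\epsilon \Phi(v_\epsilon,u)\,v_\epsilon^{\,j}.
\]
After clearing the quadratic, this is a symmetric expression in $v_0,v_1$. I would then use Newton's recursion $p_j=e_1p_{j-1}-e_2p_{j-2}$ for the weighted power sums $p_j=\tfrac12\sum_\epsilon\Phi(v_\epsilon,u)v_\epsilon^{\,j}$.

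\textbf{Step 3 (2-adic estimates, the main obstacle).} I must show three things:
\begin{itemize}
\item $p_0,p_1\in 2^5\Z_2[[u]]$;
\item $e_1\in 2^5\Z_2[[u]]$, or at least enough divisibility to keep the recursion inside $2^5\Z_2[[u]]$, with $e_2\in u\Z_2[[u]]$ (powers of $2$ allowed);
\item the $p_j$ grow in $u$-adic order, which gives convergence.
\end{itemize}
The heuristic source of the large power of $2$ is that $v_\epsilon$ is roughly $u^{1/2}$ up to factors of $8$, because $u=64q+\dots$ and $q^{1/2}$ picks up $\sqrt{64}=8$. The factor $\tfrac12$ and the weight-six normalization must then be absorbed. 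I expect the delicate part to be that the denominators $1\pm u$ and the branch of $H$ introduce only $2$-adic units, and that no hidden $1/2$ survives in $p_0$ and $p_1$. I would first compute $e_1,e_2,\Phi$ exactly and check $j=0,1,2$ numerically before attempting the general induction.

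\textbf{Step 4 (valuation consequence).} Iterating the inclusion gives $U_2^r\Fcal_f\in 2^{5r}\Tcal t\,\Z_2[[u]]\subseteq 2^{5r}q\Z_2[[q]]$. Since the coefficient of $q^m$ in $U_2^r\Fcal_f$ is $a_f(2^rm)$, this yields \eqref{eq:U2-valuation}.
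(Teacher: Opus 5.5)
Your outline follows the paper's strategy: a two-branch trace, the degree-two modular equation, the weight factor as a function of the branch, and a power-sum recursion. But it stops exactly where the theorem has content. Nothing in the proposal establishes $p_0,p_1\in2^5\Z_2[[u]]$, and since in fact $U_2(\Tcal t)=32\,\Tcal t$ exactly, there is no slack for a heuristic to absorb.

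The missing ingredient is the explicit branch identity. With $y_\epsilon=8t((\tau+\epsilon)/2)$ the paper gets $y_0+y_1=6u+8u^2$ and $y_0y_1=-u$. Implicit differentiation using $\D u=uJ$ and $\D y_\epsilon=\tfrac12y_\epsilon J_\epsilon$, followed by cubing and using $\Tcal=J^3/(1+u)$, gives $\Tcal((\tau+\epsilon)/2)/\Tcal(\tau)=(1-y_{1-\epsilon})/(1+u)$. In your notation $v_\epsilon=8y_\epsilon$ this reads
\[
 e_1=48u+64u^2,\qquad e_2=-64u,\qquad
 \Phi(v_\epsilon,u)=\frac{v_\epsilon+8u}{u(1+u)}.
\]
Hence $p_0=32$ and $p_1=64(1+21u+52u^2+32u^3)/(1+u)$, and the recursion then needs only $e_1,e_2\in\Z_2[u]$. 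Your stated requirement $e_1\in2^5\Z_2[[u]]$ is false ($v_2(e_1)=4$), but it is also unnecessary. Because $\Phi$ is linear in $v_\epsilon$, the paper can use ordinary power sums $P_m$ instead of weighted ones. Going through $J$ rather than $H^4$ and a quadratic ${}_2F_1$ transformation also avoids tracking branches of $H$.

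There is also an error in Step~2 that would corrupt exactly this computation. For $v_\epsilon(\tau)=u((\tau+\epsilon)/2)$ the chain rule gives $\D v_\epsilon=\tfrac12(\D u)((\tau+\epsilon)/2)$. So the transported derivative factor is $(\D u)((\tau+\epsilon)/2)=2\,(dv_\epsilon/du)\,\D u$, and $\Phi=2\,(dv_\epsilon/du)\,H^4((\tau+\epsilon)/2)/H^4(\tau)$. Your $\tfrac12$ is on the wrong side. Read as a formula for the transported factor, it makes $\Phi$ too small by a factor of $4$ and yields $p_0=8$, i.e.\ only a $2^3$ contraction. In a statement about exact powers of $2$, these constants carry the whole argument, so they have to be computed, not anticipated.
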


\begin{proof}
Put
\[
 s_0=t\!\left(\frac\tau2\right),\qquad
 s_1=t\!\left(\frac{\tau+1}{2}\right),\qquad
 y_i=8s_i.
\]
The degree-two modular equation is
\begin{equation}\label{eq:modular-equation}
 s_0+s_1=48t+4096t^2,
 \qquad
 s_0s_1=-t.
\end{equation}
For the product identity, set $x=q^{1/2}$ and use
$t(q)=q\prod_{n\ge1}(1+q^n)^{24}$.  Then
$t(x)t(-x)=-t(x^2)$; the reduction is Euler's identity
\[
 \prod_{m\ge1}(1+y^m)
 =\prod_{m\ge1}(1-y^{2m-1})^{-1}.
\]
For the sum, $s_0+s_1=2U_2t$ is the trace along the degree-two
correspondence and hence a modular function on $X_0(2)$.  It vanishes at
$\infty$ and has a pole of order at most two at the other cusp, so it is
$At+Bt^2$.  The coefficients of $q$ and $q^2$ give $A=48$, $B=4096$.
Equivalently,
\begin{equation}\label{eq:y-equation}
 y_0+y_1=6u+8u^2,
 \qquad
 y_0y_1=-u.
\end{equation}

Let $J_i=J((\tau+i)/2)$.  If $y=y_i$ is one root of
\[
 y^2-(6u+8u^2)y-u=0,
\]
implicit differentiation, using
$\D u=uJ$ and $\D y_i=y_iJ_i/2$, gives
\begin{equation}\label{eq:Jbranch}
 \frac{J_i}{J}
 =\frac{2u((6+16u)y_i+1)}
 {y_i(2y_i-6u-8u^2)}.
\end{equation}
A direct reduction of the cube of \eqref{eq:Jbranch} by the quadratic
relation yields
\begin{equation}\label{eq:Jcube}
 \left(\frac{J_i}{J}\right)^3
 =\frac{(1-y_{1-i})(1+8y_i)}{(1+u)^2}.
\end{equation}
Since $\Tcal=J^3/(1+u)$, it follows that
\begin{equation}\label{eq:Tbranch}
 \frac{\Tcal((\tau+i)/2)}{\Tcal(\tau)}
 =\frac{1-y_{1-i}}{1+u}.
\end{equation}

Using
$U_2F=\frac12(F(\tau/2)+F((\tau+1)/2))$ and
$y_i(1-y_{1-i})=y_i+u$, we obtain
\begin{equation}\label{eq:U2-master}
 \frac{U_2\Fcal_f}{\Tcal}
 =\frac1{16(1+u)}
 \sum_{i=0}^1(y_i+u)f(8y_i).
\end{equation}
Write $f(z)=\sum_{k\ge0}a_kz^k$ and
$P_m=y_0^m+y_1^m$.  Equation \eqref{eq:y-equation} gives
\begin{equation}\label{eq:Pm}
 P_0=2,\qquad P_1=6u+8u^2,
 \qquad
 P_m=(6u+8u^2)P_{m-1}+uP_{m-2}.
\end{equation}
Hence $P_m\in u\Z[u]$ for every $m\ge1$.  Expanding
\eqref{eq:U2-master},
\begin{equation}\label{eq:U2-expanded}
 \frac{U_2\Fcal_f}{\Tcal}
 =\frac1{16(1+u)}\sum_{k\ge0}
 a_k8^k(P_{k+1}+uP_k).
\end{equation}
For $k=0$,
$P_1+uP_0=8u(1+u)$.  For $k\ge1$, the numerator
$8^k(P_{k+1}+uP_k)$ belongs to $8u\Z_2[u]$.  Therefore the right-hand
side of \eqref{eq:U2-expanded} belongs to
\[
 \frac u2\Z_2[[u]]=32t\Z_2[[u]],
\]
which proves \eqref{eq:U2-contraction}.  Iteration gives
$U_2^r\Fcal_f\in2^{5r}\Z_2[[q]]$.  Since
$U_2^r\Fcal_f=\sum_na_f(2^rn)q^n$, this is
\eqref{eq:U2-valuation}.
\end{proof}

\begin{corollary}[Dyadic slopes for the two candidates]\label{cor:dyadic}
For all $m,r\ge1$,
\begin{equation}\label{eq:dyadic}
 v_2(c_{6a}(2^rm))\ge5r,
 \qquad
 v_2(c_{6b}(2^rm))\ge5r.
\end{equation}
\end{corollary}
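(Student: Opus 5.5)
The plan is to specialize Theorem~\ref{thm:U2} to the two power series $f$ supplied by Corollary~\ref{cor:collapse}. By \eqref{eq:C6acollapse} and \eqref{eq:C6bcollapse},
\[
 C_{6a}=\Fcal_{f_a},\qquad f_a(z)=\frac{5+22z+5z^2}{(1-z)^3},
 \qquad
 C_{6b}=\Fcal_{f_b},\qquad f_b(z)=\frac{1-z}{(1+z)^2},
\]
in the notation \eqref{eq:Ff}, with $z=u$. The first task is to check that $f_a,f_b\in\Z_2[[z]]$, and in fact in $\Z[[z]]$. Both denominators $(1-z)^3$ and $(1+z)^2$ have constant term one, so they are units in $\Z[[z]]$. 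Their inverses are given explicitly by the binomial series $\sum_k\binom{k+2}{2}z^k$ and $\sum_k(-1)^k(k+1)z^k$, which have integer coefficients.

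Once these memberships are established, \eqref{eq:U2-valuation} applies directly to $\Fcal_{f_a}$ and $\Fcal_{f_b}$, and it yields \eqref{eq:dyadic} for every $m,r\ge1$.

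One point needs care. The series identity $\Fcal_f=\Tcal t\,f(u)$ must make sense as an element of $\Z_2[[q]]$, with the $q$-expansion of $C_{6\bullet}$ agreeing with that of $\Fcal_f$. Substituting $u=64t\in q\Z[[q]]$ into a power series is legitimate because $u$ has no constant term, so the substitution converges $q$-adically. Since $\Tcal t\in q\Z[[q]]$, the coefficients $a_f(n)$ of $\Fcal_f$ are exactly $c_{6a}(n)$, respectively $c_{6b}(n)$.

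I expect no real obstacle here. All the substance lies in Theorem~\ref{thm:U2}, and this corollary is a matter of bookkeeping. Since \eqref{eq:U2-valuation} was proved for arbitrary $f\in\Z_2[[z]]$, only the identification with Corollary~\ref{cor:collapse} and the integrality of $f_a,f_b$ need to be checked.
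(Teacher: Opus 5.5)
Your proposal is correct and matches the paper's proof: the paper also notes that $(5+22u+5u^2)/(1-u)^3$ and $(1-u)/(1+u)^2$ lie in $\Z_2[[u]]$, then applies Theorem~\ref{thm:U2} to \eqref{eq:C6acollapse} and \eqref{eq:C6bcollapse}. Your explicit binomial inverses and your remark that substituting $u=64t$ is $q$-adically legitimate are bookkeeping the paper leaves implicit.
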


\begin{proof}
The two rational series
\[
 \frac{5+22u+5u^2}{(1-u)^3},
 \qquad
 \frac{1-u}{(1+u)^2}
\]
belong to $\Z_2[[u]]$.  Apply Theorem~\ref{thm:U2} to
\eqref{eq:C6acollapse} and \eqref{eq:C6bcollapse}.
\end{proof}

\section{Completion of the proof and initial data}

\begin{proof}[Proof of Theorem~\ref{thm:main}]
The $C_4$ assertion is Corollary~\ref{cor:C4done}.  For either weight-six
form, Corollary~\ref{cor:odd} gives
$c(n)/n^2\in\Z[1/2]$.  Write $n=2^rm$ with $m$ odd.  When $r=0$, the quotient is already
$2$-adically integral because $c(n)\in\Z$ and $n$ is odd.  For $r\ge1$,
Corollary~\ref{cor:dyadic} gives
\[
 v_2\!\left(\frac{c(n)}{n^2}\right)
 \ge5r-2r=3r\ge0.
\]
Thus $c(n)/n^2\in\Z_2$.  Since
$\Z[1/2]\cap\Z_2=\Z$, one has $c(n)/n^2\in\Z$.
\end{proof}

For normalization, the three forms begin
\begin{align*}
 C_4={}&q-56q^2+2076q^3-65984q^4+1941630q^5+O(q^6),\\
 C_{6a}={}&5q+2528q^2+547524q^3+87849984q^4
 +12091540750q^5+O(q^6),\\
 C_{6b}={}&q-160q^2+9972q^3-447488q^4
 +17028150q^5+O(q^6).
\end{align*}
Their integral primitives are
\begin{align*}
 \D^{-1}C_4={}&q-28q^2+692q^3-16496q^4+388326q^5+O(q^6),\\
 \D^{-2}C_{6a}={}&5q+632q^2+60836q^3+5490624q^4
 +483661630q^5+O(q^6),\\
 \D^{-2}C_{6b}={}&q-40q^2+1108q^3-27968q^4
 +681126q^5+O(q^6).
\end{align*}
These expansions are checks only.  No finite verification is used as a
proof input.

\section{Scope and reproducibility}

The theorem closes the three $\Gamma_0(2)$ candidates in
\cite[Appendix~B.1]{BDM} and strengthens magneticity from bounded
denominators to denominator one.  Two features extend beyond the three
examples:
\begin{enumerate}
\item the explicit vector-valued construction isolates the entire
odd-prime obstruction in a $2$-integral negative-weight input;
\item the inclusion \eqref{eq:U2-contraction} applies to every
$f\in\Z_2[[u]]$, not merely to the two rational functions appearing in
$C_{6a}$ and $C_{6b}$.
\end{enumerate}
Thus the proof supplies a general bad-prime mechanism rather than a
coefficientwise verification of two isolated forms.

The ancillary program
\texttt{verify\_level\_two\_k3\_packet.py} uses exact integer and rational
arithmetic.  It verifies the modular identities through $q^{120}$, the
three divisibilities through $n=120$, the vector-valued principal parts
and $2$-integrality, the specialized CM coefficient identities through
$n=50$, the degree-two modular equation, and the algebraic branch
identity underlying \eqref{eq:Jcube}.  Its SHA-256 digest is
\begin{center}
\texttt{423d1de87ba9dd04788c4418617b66058c0ed84eaa789ad3d2b85ddf35fb3d00}.
\end{center}
The finite checks are proof-independent audits.  The only non-elementary
external coefficient identity used in the proof is the published
higher-level theta-lift formula \cite[Proposition~6.1]{LS}; all of its
parameters and normalizations are specialized explicitly in
Proposition~\ref{prop:lift-formula}.

\end{document}